\newtheorem{definition}{Definition}[section]
\newtheorem{theorem}{Theorem}[section]
\newtheorem{lemma}[theorem]{Lemma}
\newtheorem{corollary}{Corollary}[section] 
\newtheorem*{unnumberedtheorem}{Theorem}
\numberwithin{equation}{section}
\titleformat{\section}[hang]{\normalfont\bfseries\LARGE}{\thesection}{1em}{}
\titlespacing{\section}{0pt}{\baselineskip}{0.5\baselineskip}
\title{Speed of Weak mixing for the Chacon map}
\author{Nelson Moll }
\date{July 2023}
\begin{document}

\maketitle

\begin{abstract}
We first consider a non-primitive substitution subshift that is conjugate to the Chacon map.  We then derive spectral estimates for a particular subshift and the speed of weak mixing for a class of observables with certain regularity conditions.  After, we use these results to find the speed of weak mixing for the Chacon map on the interval and show that this bound is essentially sharp.
\end{abstract}

 \section{Introduction}
 
\quad The Chacon map \cite{chacon1969weakly} is one of the first examples of a transformation that is weakly mixing but not strongly mixing. In fact, this is the 'generic' case.  It was shown in \cite{2f8d2139-f1f2-383b-bbc0-f40be59acc66} that the set of weakly mixing transformations in the space of invertible measure preserving transformations is a dense $G_{\delta}$ set. On the other hand,
 in \cite{rohlin1948general} it was proved that the 'general' measure preserving transformation is not mixing.  Other examples of systems which are weakly mixing but not mixing have since been found.  For example, in \cite{346190e9-6441-3b24-b68e-9889fa445298} it was proved that almost all interval exchange transformations are weakly mixing but are never mixing \cite{Katok1980IntervalET}.     
 
Our goal in this paper is to determine the rate of weak mixing for the Chacon map using estimates on the spectral measures.  We will use this estimate along with a result from  \cite{park2023size} to establish the existence of a set of times for which the Chacon map fails to be strongly mixing.  Using a spectral analysis approach to effective weak mixing was used in \cite{avila2021quantitative} to find the speed of weak mixing for interval exchange transformations.  The same method was also used in \cite{bufetov2021holder} and \cite{forni2021twisted} to find estimates for translation flows.  The general ’recipe’ is to derive H\"older-type bounds on the spectral measures
from upper bounds on twisted ergodic sums,  which in turn are proved using some sort of quantitative 
Veech criterion \cite{c40c3f8e-93da-39bf-87a6-9077825325cd} to bound the twisted sums.  A general scheme to this approach for the case of substitutions can be found in \cite{bufetov2014modulus}.

The main results in this paper are the following theorems.

\begin{unnumberedtheorem}[Theorem A]

Let $U$ be the operator on $L^2([0,1])$ defined by $U(f)=f \circ C$, where $C$ is the Chacon map.  
Let $f \in Lip([0,1])$ have zero expectation  and $g \in L^2([0,1])$.  Denote $\|f\|_L$ to be the Lipschitz norm of $f$ and $\| g\|_2$ the $L^2$ norm of $g$.  Then there is a constant $K_C>0$ dependent only on the Chacon map such that

\begin{center}
    $\frac{1}{N} \sum \limits_{k=0}^{N-1}|\langle U^kf, g \rangle |^2 \leq K_C \|f \|_L \| f\|_2 \|g \|_2^2 [\log_{3}N ]^{-\frac{1}{6}}$ \quad for all $N > 1$.
\end{center}
\end{unnumberedtheorem}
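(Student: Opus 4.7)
The plan is to prove Theorem~A via the spectral-analysis scheme of Bufetov and Solomyak for substitution systems. First I would pass through the conjugacy mentioned in the abstract: the question on the interval becomes one on a non-primitive substitution subshift $(X,T)$ whose natural observables are step functions adapted to the rank-one tower. To each mean-zero $f\in L^2$ the spectral theorem attaches a positive finite measure $\sigma_f$ on $\mathbb{T}$, and a Fej\'er-kernel computation yields
\[
\sigma_f\bigl(B(\theta,1/N)\bigr)\;\le\;\frac{C}{N^{2}}\int_X\bigl|S_N^{\theta}(f)\bigr|^{2}\,d\mu,
\]
where $S_N^{\theta}(f)(x)=\sum_{k=0}^{N-1}e^{2\pi ik\theta}f(T^{k}x)$ is the twisted Birkhoff sum. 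Combined with the standard estimate $\frac{1}{N}\sum_{k=0}^{N-1}|\langle U^{k}f,g\rangle|^{2}\le C\,\|g\|_2^{2}\sup_{\theta}\sigma_f(B(\theta,1/N))$, the theorem reduces to a log-H\"older modulus-of-continuity bound for the spectral measures of sufficiently regular observables.

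The central step is then a uniform-in-$\theta$ upper bound on $S_N^{\theta}(f)$ for $f$ a cylinder function on $(X,T)$. I would invoke the quantitative Veech criterion for substitution subshifts to reduce this to controlling a product of renormalization matrices along the substitution hierarchy. Because the Chacon construction uses a spacer and its tower heights satisfy $h_{n+1}=3h_n+1$, there are $n\asymp\log_{3}N$ available scales below $N$, and at each level of the recursion the twisted cocycle admits a definite multiplicative gain away from a small set of resonant $\theta$. A careful bookkeeping---separating the eigenvalues of the Koopman operator from their Diophantine neighbourhoods, and exploiting the spacer to prevent accumulated cancellation---should yield a bound of the form
\[
\int\bigl|S_{h_n}^{\theta}(f)\bigr|^{2}\,d\mu \;\le\; C\,\|f\|_{\infty}^{2}\,h_n^{2}\,n^{-1/6},
\]
which transfers via the Fej\'er identity above to the spectral bound $\sigma_f(B(\theta,1/N))\le C\|f\|_\infty^{2}(\log_3 N)^{-1/6}$.

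The final step is to pass from cylinder observables to $f\in \mathrm{Lip}([0,1])$ by tower approximation: write $f=f_n+r_n$, where $f_n$ is the average of $f$ on the $h_n$ levels of the rank-one tower and $\|r_n\|_\infty\le \|f\|_L/h_n$. Expanding $|\langle U^{k}f,g\rangle|^{2}$ and applying the Cauchy-Schwarz inequality between the $f_n$ and $r_n$ contributions produces a cross term proportional to $\|f_n\|_2\,\|r_n\|_\infty$; choosing $n=\lfloor \log_3 N\rfloor$ makes $\|r_n\|_\infty\lesssim \|f\|_L/N$ and collects the asymmetric factor $\|f\|_L\,\|f\|_2$ that appears in the theorem, while the leading contribution from $f_n$ supplies the $(\log_3 N)^{-1/6}$ decay from the previous step. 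The hard part is the twisted-sum estimate: the Chacon substitution is non-primitive and its Koopman operator has rational eigenvalues accumulating densely on $\mathbb{T}$, so the quantitative Veech criterion must be implemented with care, the renormalization matrices must be shown to contract in a well-chosen seminorm uniformly in $\theta$, and the $1/6$-exponent must be tracked through every level of the hierarchy.
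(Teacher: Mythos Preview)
Your outline has the right architecture (spectral measures, twisted sums, Veech-type renormalization, cylinder approximation), but two of the load-bearing steps do not work as you describe them, and they are exactly where the paper's argument differs.

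First, the Veech criterion does \emph{not} yield a bound on $\int|S_N^{\theta}(f)|^{2}$ that is uniform in $\theta$. What the matrix-product estimate actually produces is $|S_N^{\theta}(f)|\lesssim n\,\|f\|_L\bigl(N^{1-c\|\theta\|^{2}}+(\log N)^{2}\bigr)$ for a rank-$n$ cylinder function, which degenerates as $\theta\to 0$. Consequently one cannot bound $\sup_{\theta}\sigma_f(B(\theta,1/N))$ from Veech alone, and your ``standard estimate'' route stalls. The paper instead writes $I_N=\int_{\mathbb T}\langle S_N^{\,\cdot}(f,\omega),g\rangle\,d\sigma_{f,g}(\omega)$ and splits it as $I_\epsilon^{+}+I_\epsilon^{-}$: on $\{\|\omega\|\ge\epsilon\}$ the Veech bound controls the integrand pointwise, while on $\{\|\omega\|<\epsilon\}$ one bounds $\sigma_f(B(0,\epsilon))$ via the Fej\'er lemma applied at $\omega=0$, which requires a separate input---the $(\log N)^{2}$ speed-of-ergodicity for Birkhoff sums of cylinder functions (Adamczewski). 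Optimizing $\epsilon$ then yields the $(\log N)^{-1/6}$. Your sketch attributes the near-zero control to ``exploiting the spacer'' within the Veech machinery, but that is not where it comes from. Relatedly, the Chacon Koopman operator has \emph{no} eigenvalues other than $1$ (the map is weakly mixing), so there is no dense set of rational eigenvalues to separate.

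Second, the approximation step is mis-calibrated. The paper approximates a weakly Lipschitz $f$ by a cylinder function of rank $n$ (i.e.\ depending on $x_0,\dots,x_{n-1}$), with error $O(\|f\|_L/n)$, and the twisted-sum bound above carries a prefactor $n$ coming from that rank; the choice $n=\lfloor(\log_\theta N)^{1/6}\rfloor$ balances these. Your proposal instead averages over the levels of the stage-$n$ rank-one tower with $n=\lfloor\log_3 N\rfloor$; those step functions correspond, on the subshift side, to very high-rank cylinder data, and feeding them into the Veech estimate costs a factor comparable to $N$, wiping out the gain. Finally, the paper does not work with the non-primitive two-letter substitution: it passes to the conjugate primitive three-letter substitution $0\mapsto 0012$, $1\mapsto 12$, $2\mapsto 012$, which is what makes Perron--Frobenius, the return-word argument, and the eigenvector with eigenvalue $1$ available for the Veech contraction.
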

We also prove that this estimate is essentially sharp in Section 12.  By using the Cauchy-Schwartz inequality for sums with one sequence identically equal to one, we can compare the upper bound from Theorem A with the lower bound found in Theorem B.
 \begin{unnumberedtheorem}[Theorem B]

There exists a diverging sequence of $N$ and a constant $C > 0$ dependent only on the Chacon map such that, for each $N$ in the sequence, there is a Lipschitz $f_N$ and  a square integrable $g_N$ with the property that 
\begin{align*}
\sum \limits_{i=0}^{N-1} \left|\int_X f_N(T^ix) g_N(x) dx - \int_X f_N(x) dx \int_X g_N(x) dx\right| \\
& \geq  C \frac{N}{\log(N)^2} \cdot \| f_N\|_L^{\frac{1}{2}} \cdot \| f_N\|_2^{\frac{1}{2}} \cdot \| g_{N}\|_2.
\end{align*}

\end{unnumberedtheorem}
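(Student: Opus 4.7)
The plan is to exhibit, along the sequence $N_n=h_n$ of Chacon tower heights (so that $\log N_n\asymp n$), explicit test functions $f_{N_n},g_{N_n}$ realising the lower bound. The strategy runs the scheme used for Theorem A in reverse: instead of deducing Hölder-type \emph{upper} estimates on the spectral measures from upper bounds on twisted Birkhoff sums, I would establish a matching quantitative \emph{concentration} estimate on the spectral measure, and then convert it into a Dirichlet-kernel lower bound for $\sum_{i=0}^{N-1}|\langle U^i f,g\rangle|$.

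The first step is to pick $f_{N_n}$ and $g_{N_n}$ to be smoothed indicators of well-chosen unions of levels of the stage-$n$ Chacon tower, tailored so that the functions are almost invariant under $T^{h_n}$ on a large portion of the space, with $\|f_{N_n}\|_2$ and $\|g_{N_n}\|_2$ of controlled size and $\|f_{N_n}\|_L$ of order $h_n$ (arising from the smoothing scale near the level boundaries). Then, writing the cross-spectral measure $\sigma_{f,g}$ so that $\langle U^k f,g\rangle=\widehat{\sigma}_{f,g}(k)$, one obtains for any phase $\theta$ the duality inequality
\begin{equation*}
\sum_{i=0}^{N-1}\bigl|\langle U^i f,g\rangle\bigr|\;\ge\;\Bigl|\sum_{i=0}^{N-1}\langle U^i f,g\rangle\,e^{-2\pi i\,i\theta}\Bigr|\;=\;\Bigl|\int D_N(x-\theta)\,d\sigma_{f,g}(x)\Bigr|,
\end{equation*}
where $D_N$ is the Dirichlet kernel. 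Up to standard tail control this right-hand side is at least $c\,N\cdot|\sigma_{f,g}|\bigl(B(\theta,1/N)\bigr)$, so the lower bound will follow once I can guarantee enough localised spectral mass near some appropriate $\theta_n$.

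The heart of the proof would then be a local spectral concentration estimate: for the choices above, at $N=h_n$ and the natural frequency $\theta_n=0$ (reflecting the near-rigidity of $T^{h_n}$ on a positive-measure portion of the space), I would show
\begin{equation*}
|\sigma_{f_{N_n},g_{N_n}}|\bigl(B(0,1/h_n)\bigr)\;\gtrsim\;\frac{1}{n^{2}}\,\|f_{N_n}\|_L^{1/2}\|f_{N_n}\|_2^{1/2}\|g_{N_n}\|_2.
\end{equation*}
My approach would be to iterate the substitution renormalisation already used for the upper bound: at each scale $k\le n$, the near-return $T^{h_k}\approx\mathrm{Id}$ on a positive fraction of the phase space contributes an identifiable piece of $\sigma_{f_{N_n},g_{N_n}}$ localised at scale $1/h_k$ around the origin, and one tracks how the $n$ pieces combine through the incidence matrix associated with the Chacon substitution introduced earlier in the paper.

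The main obstacle, and what I expect to force both the restriction to the subsequence $N=h_n$ and the precise exponent $2$ in $\log(N)^2$, is this final concentration estimate. Upper bounds on spectral measures can absorb cancellations, but a lower bound requires that the contributions from the $n$ renormalisation scales do not interfere destructively; producing enough coherence while losing only a logarithmic factor per scale is where the real work lies. Once that step is secured, assembling the stated inequality --- with the declared normalisation $\|f_N\|_L^{1/2}\|f_N\|_2^{1/2}\|g_N\|_2$ --- is a direct bookkeeping exercise that follows by expanding the Dirichlet integral and keeping track of norms along the construction.
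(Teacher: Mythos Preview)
Your spectral-concentration route has a genuine gap at its central step. The claimed lower bound $|\sigma_{f_{N_n},g_{N_n}}|(B(0,1/h_n))\gtrsim n^{-2}\|f_{N_n}\|_L^{1/2}\|f_{N_n}\|_2^{1/2}\|g_{N_n}\|_2$ cannot hold for the \emph{centered} observables that the statement requires: the speed-of-ergodicity bound of Section~8, fed through the lemma $\sigma_f(B(0,r))\le \tfrac{\pi^2}{4N}G_N(f,0)$ in Section~9, forces $\sigma_{f-\int f}(B(0,1/h_n))=O(h_n^{-2}\cdot\mathrm{polylog}(h_n))$, which is exponentially small in $n$, not of order $n^{-2}$. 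Your near-rigidity heuristic is misapplied: that $T^{h_n}$ is close to the identity on a large set means $\widehat{\sigma}_f(h_n)\approx\|f\|_2^2$, i.e.\ $\sigma_f$ concentrates near the \emph{whole} lattice $h_n^{-1}\mathbb{Z}$, not near $0$ alone; and once you subtract $\int f\int g$ you have removed precisely the atom at $0$. Separately, the Dirichlet-kernel step ``up to standard tail control this is at least $cN\,|\sigma_{f,g}|(B(\theta,1/N))$'' is not a standard fact: $D_N$ oscillates and $\sigma_{f,g}$ is a complex measure, so neither the main term nor the tail is controlled without extra structure.

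The paper's argument is entirely different and avoids spectral analysis altogether. It imports from \cite{park2023size} that for $A_k=[0,2\cdot 3^{-(k+1)}]$ there are at least $C_k(\log N)^t$ integers $i\in[0,N]$ with $\mu(A_k\cap T^{-i}A_k)=0$. Taking $g_N=1_{A_k}$ and $f_N$ positive, $C^1$, supported in $A_k$ with $\log(N)\,|\int f_N|\gtrsim\|f_N\|_L$, each such $i$ makes the \emph{uncentered} correlation vanish, so the summand equals $|\int f_N\cdot\int g_N|$ exactly. Choosing $k$ so that $h_k\asymp\log N$ and $t=h_k-5$, a Stirling estimate on $C_k$ gives $C_k(\log N)^t\mu(A_k)^{1/2}\gtrsim N/\log N$, and the stated inequality follows by direct bookkeeping. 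The mechanism is thus combinatorial (many exact disjoint-return times) rather than spectral.
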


Notice that the speed of weak mixing for the Chacon map is logarithmic. Heuristically, this is in part due to the fact that the second highest eigenvalue for the substitution matrix of the Chacon substitution has norm equal to one.  In the case of interval exchange transformations of rotation type, it was proved in \cite{avila2021quantitative} that the speed of correlation decay is also logarithmic.    Contrast this with Section 7 of \cite{avila2021quantitative}, where it was shown that an interval exchange transformation that is not of rotation type has polynomial decay.  

 In the case of the Chacon map and rotation type IET's, we are able to bound the renormalization dynamics from the Veech argument (Section 6 of \cite{avila2021quantitative} for example) away from the integer lattice with some uniform frequency due to that fact that the second highest eigenvalue has norm 1.  In the case of interval exchange transformations, the case is slightly more subtle.  Rauzy-Veech induction on the interval exchange transformations gives us a family of substitutions to consider, and if the IET is of rotation type, then we can, in particular, use the rotation-like properties of the family of substitution matrices to gain logarithmic lower bounds.  This is explicitly found in Section 7.2 of \cite{avila2021quantitative}.  

For the case of substitutions, we can guess that the speed of weak mixing will be polynomial when the second highest eigenvalue of the substitution matrix is outside the unit circle.  For example, in Section 5 of \cite{bufetov2014modulus} it was shown that in an analogous case the speed of weak mixing is polynomial for suspension flows.  In general, there is a close relationship between translation flows, substitutions, and interval exchange transformations.  For reference, these relations are highlighted in \cite{avila2021quantitative}, \cite{bufetov2014modulus}, \cite{forni2021twisted} and \cite{bufetov2021holder}.  Although the paper shows the speed of weak mixing for the Chacon map, the end of Section 6 describes which technical assumptions are sufficient to extend the proof of effective weak mixing to other substitutions.  Apparently, there is a relationship between return words for substitutions and weak mixing that is still being investigated.  Knowledge of 'good' return words is almost equivalent to knowledge of the speed of weak mixing for a given substitution given other mild assumptions.

\section{Structure of the Paper}
  A concise introduction to substitutions and the Chacon map is found in sections 3 and 4 respectively.  The method we used to extract quantitative estimates on the substitution system comes from an analysis on the spectral measure of small intervals.  The relationship between twisted sums, spectral measure, and the quantitative rate of weak mixing is found in section 5.  The work in sections 6 through 8 serve to first find the rate of weak mixing for cylindrical functions defined on arbitrarily large cylinders.  Sections 10 and 11 use the fact that Lipschitz functions can be approximated by cylindrical functions to get the final upper bound for the speed of weak mixing.  What follows is a proof showing that this bound is essentially sharp.  Speed of weak mixing combined with a result from \cite{park2023size} also gives a corollary that bounds the frequency of times for which the Chacon map fails to be mixing. 
  
The Chacon map is constructed by cutting and stacking the unit interval, as outlined in \cite{chacon1969weakly}.  By tracking which intervals a particular point in the unit interval enters after iterating that point under the action of the Chacon map, we can assign almost every point in the unit interval a code given by its orbit.  This coding is conjugate to a primitive substitution subshift.  Bufetov and Solomyak showed in \cite{bufetov2014modulus} that it suffices to bound spectral measures and a Fourier anologue of the Birkhoff sums to find effective weak mixing.  In this setting, we can use the techniques from \cite{bufetov2014modulus} to find the speed of weak mixing for the Chacon substitution subshift.  This formulation is outlined in section 5.  In other terms, this methodology transfers the problem of effective weak mixing for $L^2$ functions to that of finding upper bounds for the spectral measures and the twisted sums.  To find quantitative bounds for the spectral measures, we will need both the speed of ergodicity, provided in \cite{adamczewski2004symbolic}, and a quantitative Veech argument similar to what is found in \cite{avila2021quantitative} and \cite{bufetov2014modulus}.

In this paper we utilize and generalize  results from \cite{bufetov2014modulus}, specifically estimates for the spectral measure of a function in terms of the twisted sums, along with quantitative results for rank 1 cylindrical functions. Expanding on these findings, we extend the results to cylindrical functions of arbitrary rank and demonstrate that, in the case of the Chacon map, we can derive explicit bounds for the twisted sums. These bounds, in turn, enable us to estimate the speed of weak mixing.  In particular we prove quantitative weak mixing for a primitive substitutions that is conjugate to the Chacon map.  We will then transfer these results back to the Chacon map on the unit interval using the map that codes the orbit of each point. This leads to the following theorem, proved in section 10. 

We will first prove effective weak mixing for simple functions on the substitution subshift using a Veech argument in sections 6 and 7 along with an estimate for the sped of ergodicity in section 8, then transfer these weak mixing estimates to a larger class of functions that can be approximated by these simple functions.  In order to expand our results from simple functions to general observables, we will apply the Chacon substitution to subwords that define the cylinders the simple functions are defined on.  We can get the growth rate of the twisted sums on simple functions determined by finite words, then leverage these estimates to get the result for infinite strings in the subshift.  This is done by applying the substitution to the finite words iteratively and examining how a spectral variation of simple functions on the substitution subshift grows as we apply the substitution iteratively to words on the subshift.  The explicit proof can be found in Section 6.  

The substitution matrix $S$ for the Chacon substitution has the crucial property that the second largest eigenvalue has norm equal to one.  Our estimate requires us to be able to bound vectors $v$ away from the integer lattice uniformly after iteratively applying the substitution matrix.  In particular, we need $\|S^n (\omega \cdot v) \|_{\mathbb{Z}} \geq |\omega|$ for some $\omega \in (0,1)$ for some uniform frequency  of $n \in \mathbb{Z}$ with $n \in [0,N]$.  This will give us a quantitative Veech estimate, which allows us to follow the type of argument found in \cite{bufetov2014modulus} or \cite{avila2021quantitative} to find effective weak mixing for the Chacon map.  

The work from \cite{park2023size} proves that there is an interval $A \subset [0,1]$ such that the number of  $n \in [0,N] \cap \mathbb{Z}$ such that $C^{-n}A \cap A = \emptyset$ is bounded below by a power of $\log(N)$.  Hence we can create functions $f_A$ supported on $A$ that by definition satisfy $\langle f_A \circ C^n, f_A \rangle = 0$ for a number of times $n$ bounded below in terms of $N$.  This allows us to derive lower bounds for effective weak mixing.  

\section{Construction of the Chacon Map }

The construction of the Chacon map and a proof of its properties can be found in \cite{chacon1969weakly}.  It was proved that the map is invertible, preserves the Lebesgue measure, and is weakly mixing but not mixing.  The Chacon map is created inductively by cutting subintervals out of $[0,1]$ and mapping them by translation onto each other by some inductive procedure.  This mapping can be visually represented by stacking the subintervals in a tower such that each level is mapped onto the one vertically above it by translation.  Each step in the procedure uses only part of the unit interval, but eventually every point in $[0,1]$ will included in the domain of the Chacon map.  The inductive scheme is as follows: \\

Step 1:  Take the unit interval and cut it into two pieces so that the left hand interval has length $\frac{2}{3}$ and the right hand interval has length $\frac{1}{3}$.  Call these pieces $I_0$ and $I_1$ respectively.  Now cut $I_0$ into three equal pieces, labeled first to third from left to right, and also cut an interval of length equal to that of the first interval in $I_0$ from $I_1$.  By translation we map the first interval from $I_0$ onto the second, the second interval onto the interval cut from $I_1$, and then map that interval onto the third interval from $I_0$.  We can pictorially represent these translations by stacking the range of the translation on top of its domain.  The image of each point is then the point vertically above it in the stack.   \\

Step $n$:  For the $n$'th step we cut the stack into three equally sized pieces and cut, starting from the left, an interval of equal size from the unused part of $I_1$.  By translation we map the top of the left stack to the bottom of the middle stack, send the top of the middle stack to the newly cut piece of equal length from $I_1$, and map this interval onto the bottom of the third stack.  Call the map that results $C_n$. 

\-\\
Now define the Chacon map by the pointwise limit $C = \lim \limits_{n \to \infty} C_n$.  Notice that for $m < n$ we have that $C_m(x) = C_n(x)$ on the domain of definition for $C_m$.  Since this domain of definition for $C_m$ is eventually  entire interval (mod sets of measure zero), the pointwise limit is defined almost everywhere.  \\

If we code the orbit of each point $x \in I$ by $x_i = j$ if $C^i(x) \in I_j$ then a pattern emerges in the string $\{ x_j \}$.  We can see that each word in $\{ x_j \}$ is contained in the string given recursively by $S_{n+1}=S_nS_n1S_n$ with $S_0=0$.  This recurrence can be modeled by a substitution, and it is from this model that we will be able to find quantitative estimates for the Chacon map.  Let $T$ be the shift map defined on $(\{0,1 \}^{\mathbb{Z}}, T)$, and let $\alpha$ be the substitution defined by

\begin{center}
    $\alpha(0)=0010$ \\
    $\alpha(1)=1$.
\end{center}

There is a measurable map $h$ with a measurable inverse from the unit interval to the substitution subshift in $(\{0,1 \}^{\mathbb{Z}}, T)$, called $(X,T)$, such that $h \circ C = T \circ h$.
This map can be defined explicitly by looking at the code of each $x \in I$ under the action of the Chacon map $(C, I)$.  Indeed, define $h: I \to X$ by $h(x)_i =j$ if $C^i(x) \in I_j$ where $j \in \{0,1 \}$.

\section{Substitutions}
Let $T: A^{\mathbb{Z}} \to A^{\mathbb{Z}}$ be the shift map on an alphabet $A$.  Define $A_w$ to be the set of words with letters in $A$.  A substitution is a map $\beta: A \to A_w$ and the substitution dynamical system is the shift map on a subshift defined to be the set of strings with the property that any word in the string is contained in a word of the form $\beta^n(a)$ for some $n \geq 0$ and $a \in A$.  \\

The substitution matrix $S$ is defined such that $S_{i,j}$ is equal to the number of letters $i$ in the word $\beta(j)$.  Notice that the length of the word $\beta^n(i)$, denoted by $|\beta^n(i)|$, satisfies 
\begin{center}
    $|\beta^n(i)|=\langle \hat{1}, S^n e_i \rangle$,
\end{center}
where $\hat{1}$ is the vector with entries all equal to $1$.
  We say that a substitution is primitive if the matrix $S$ is primitive, meaning there is an $n$ such that $S^n$ is entrywise positive.  A substitution subshift is a closed shift invariant set of $A^{\mathbb{N}}$.  If the substitution is primitive, then the shift is uniquely ergodic \cite{queffelec2010substitution}.   
  
The substitution given by 

\begin{center}
    $\alpha(0)=0010$ \\
     $\alpha(1)=1$
\end{center}

is conjugate  to the primitive substitution 

\begin{center}
    $\beta(0)=0012$ \\
    $\beta(1)=12$ \\
    $\beta(2)=012$.
\end{center}

You can go from $\beta$ to $\alpha$ by replacing every $2$ with a $0$.  The inverse is the function that sends every $0$ that appears just after a $1$ to a $2$.  Since primitive substitutions are uniquely ergodic the Chacon map is uniquely ergodic because it is conjugate to the primitive substitution subshift defined by $\beta$.

\section{Spectral Measures and the Speed of Weak Mixing}
We will first define quantitatively what it means for a process to "mix" two different sets together.  Let $T : X \to X$ be a measurable transformation that preserves the probability measure $\mu$ on $X$. 
\begin{definition}

A transformation $T: X \to X$ is called weakly mixing if for any $f,g \in L^2(X)$ we have 

\begin{center}
    $ \frac{1}{N} \sum \limits_{k=0}^{N-1}|\int_X f(T^kx)\overline{g(x)} d \mu-\int_X f d \mu \overline{{\int_X}g d \mu} | \to 0$ as $N \to \infty$
\end{center}
\end{definition}
Notice that in the definition of weakly mixing we only demand that the terms inside the sum converge to zero in average.  A strengthening of this condition is to demand that the sequence converges to zero. 
\begin{definition}
A transformation $T:X \to X $ is strongly mixing, or mixing, if for any $f,g \in L^2(X)$ 

\begin{center}
    $|\int_X f(T^kx)\overline{g(x)} d \mu-\int_X f d \mu \overline{\int_X g \, d\mu} | \to 0$ as $N \to \infty$
\end{center}
\end{definition}
 In this paper  we want to find some function $h(N)$ with the property $\frac{h(N)}{N} \to 0$ such that for any measurable functions $f$ and $g$ with certain regularity conditions we have 

\begin{center}
    $  \sum \limits_{k=0}^{N-1}|\int_X f(T^kx)\overline{g(x)} d \mu-\int_X f d \mu 
 \overline{\int_Xg d \mu} | =O(h(N))$.
\end{center}

In our search for  a suitable $h(N)$ we will make use of the spectral measure to simplify the sum that appears in the definition of weak mixing. \\

Let $(X,T,\mu)$ is a measure-preserving transformation.  We can associate each  $f \in L^2(X)$ with a positive measure on the unit circle, denoted $\sigma_{f,f}$ or $\sigma_{f}$, that has Fourier coefficients

\begin{center}
    $\hat{\sigma}_{f,f}(-k)=\int_0^1 e^{2 \pi i k \omega} d \sigma_{f,f}(\omega) = \langle f \circ T^k, f \rangle $.
\end{center}
The existence of such a measure is guaranteed by Bochner's theorem.  Let $U$ be the operator on $L^2(X)$ defined by $U(f)=f \circ T$.  Using the spectral measure we can reduce the sum found in the definition of weak mixing.  Indeed, for $f$ with zero average we have

\begin{align*}
    I_N= \sum \limits_{n=0}^{N-1} |\langle U^n(f), f \rangle |^2 \\
  &  =\sum \limits_{n=0}^{N-1} \langle U^n(f), f \rangle \int_{R /Z} e^{2 \pi i n \omega } d \sigma_f( \omega) \\
   & = \int_{R / Z} \langle \sum \limits_{n=0}^{N-1} e^{2 \pi i n \omega} U^n(f), f \rangle d \sigma_f(\omega) \\
& =I_{\epsilon}^{-}+I_{\epsilon}^{+}
\end{align*}

where 

\begin{equation}
I_{\epsilon}^{-}(N)= \int \limits_{- \epsilon}^{\epsilon} \langle \sum \limits_{n=0}^{N-1} e^{2 \pi i n \omega} U^n(f), f \rangle d \sigma_f(\omega)
\label{5.1}
\end{equation}

and

\begin{equation}
    I_{\epsilon}^{+}(N)= \int \limits_{ \epsilon}^{1-\epsilon} \langle \sum \limits_{n=0}^{N-1} e^{2 \pi i n \omega} U^n(f), f \rangle d \sigma_f(\omega). 
\end{equation}

The term $\sum \limits_{n=0}^{N-1} e^{2 \pi i n \omega} U^n(f)(x)=S_N^x(f, \omega)$ is called the twisted sum.  We can see from this estimate that we will need to find a bound for the twisted sum in order to find the speed of weak mixing.  Sections 6 and 7 provide the machinery to estimate $I_{\epsilon}^{+}$ and section 8 will estimate $I_{\epsilon}^{-}$.

\section{Estimating the Twisted Sums Using Matrix Products}

We say that $w$ is a word in the substitution if it is contained in $\beta^n(a)$ for some $n$ and letter $a \in A$.  Let $T$ be the shift map.  Let $w=w_1 \dots w_p$ be a word with $w_i \in A$.  We define $T^n(w)=w_{n+1} \dots w_p$ for $0 \leq n < p$. A cylinder of rank $n$ is defined to be the set of strings such that the first $n$ coordinates are equal to a fixed word of length $n$.   

For $x \in X$, define $x[0,k]=x_0 \dots x_{k}$.  A cylinder of rank $n$ is defined to be the set of $x \in X$ such that $x[0,n-1] = w$ for some word $w$ in the substitution with length $n$. Let $J_n$ be the number of cylinders of length $n$ in the shift invariant substitution space $X$.  We will enumerate each cylinder of rank $n$ and index them with $k$.  The notation $[k,n]$ means the $k$'th cylinder of rank $n$ and $1_{[k,n]}$ is the characteristic function on $[k,n]$.  Since $\beta$ is a primitive substitution we have $J_n \leq c'n$ for some $c'>0$ that depends only on the substitution \cite{queffelec2010substitution}.   If a function $f: X \to \mathbb{R}$ is of the form

 \begin{center}
     $f(x)= \sum \limits_{k=1}^{J_n} r_k 1_{[k,n]}(x)$ \quad for $r_k \in \mathbb{R}$
 \end{center}
then we say that $f$ is a cylindrical function of rank $n$.  Note that $n$ is the length of the cylinder $[k,n]$.  \\

  The following results in this section follow the strategy  used in section 3 of \cite{bufetov2014modulus}.  We will alter that method slightly to prove resutls for cylindrical functions of arbitrarily large rank.  Let $v$ be a word of length equal to or greater than $n$.  
\begin{definition} For a word $v$ in the substitution and $\omega \in [0,1)$ let
\begin{center}
    $\phi_{[k,n]}(v,\omega)=\sum_{j=0}^{|v|-n} 1_{[k,n]}(T^jv)e^{-2 \pi i \omega j}$.
\end{center}
\end{definition}

There is a formula for $\phi_{[k,n]}(vw, \omega)$ in terms of $\phi_{[k,n]}(v,\omega)$ and $\phi_{[k,n]}(w,\omega)$ that is similar to what is found in \cite{bufetov2014modulus} for rank one cylindrical functions.  One notable difference is that the indicator function $1_{[k,n]}$ will eventually process words that are parts of both $v$ and $w$ after they have been shifted a sufficient number of times.  The following term will account for this.
\begin{definition}
Let $v=v_1 \cdots v_{p}$ and $w=w_1 \cdots w_q$ be words in the substitution with $v_i, w_i \in A$ and $p,q \geq n > 1$.  Define
\begin{center}
    $ H(v,w,\omega,n,k) = \sum \limits_{j=1}^{n-1}1_{[k,n]}(v_{p-j+1} \cdots v_{p}w_1 \cdots w_{n-j})e^{- 2 \pi i \omega j } $.
    \label{def5.2}
\end{center}
\end{definition}
The $\omega$, $n$ and $k$ will often be fixed and sometimes dropped.  
\begin{lemma} Let $v$ and $w$ be words in the substitution and $\omega \in [0,1)$.  Then
\begin{center}
     $\phi_{[k,n]}(vw,\omega)=\phi_{[k,n]}(v,\omega)+e^{-2 \pi i \omega |v|}\phi_{[k,n]}(w,\omega)+ e^{- 2 \pi i \omega (|v|-n+1) }H(v, w, \omega, n , k)$
\end{center}
\label{lemma6.1}
\end{lemma}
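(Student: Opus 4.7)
The plan is to expand $\phi_{[k,n]}(vw,\omega)$ directly from its definition and split the summation index according to where the length-$n$ starting window sits relative to the boundary between $v$ and $w$. Writing $p=|v|$ and $q=|w|$, the definition gives
\[
\phi_{[k,n]}(vw,\omega) = \sum_{j=0}^{p+q-n} 1_{[k,n]}(T^{j}(vw))\, e^{-2\pi i \omega j}.
\]

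First I would isolate three natural ranges of $j$. For $j\in[0,p-n]$ the first $n$ letters of $T^j(vw)$ lie entirely inside $v$ and coincide with the first $n$ letters of $T^j(v)$, so this range reproduces $\phi_{[k,n]}(v,\omega)$ term-for-term. For $j\in[p,p+q-n]$ the first $n$ letters lie entirely inside $w$ and agree with those of $T^{j-p}(w)$; the shift of index $j \mapsto j-p$ converts this block into $e^{-2\pi i\omega|v|}\phi_{[k,n]}(w,\omega)$. The remaining indices $j\in[p-n+1,p-1]$ form the boundary-straddling range, where the first $n$ letters of $T^j(vw)$ are precisely $v_{j+1}\cdots v_p w_1\cdots w_{n-(p-j)}$, that is, a suffix of $v$ concatenated with a prefix of $w$ of total length $n$. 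These three ranges are disjoint and, thanks to the assumption $p,q\geq n$, exhaust the full summation interval $[0,p+q-n]$.

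Next I would reindex the straddling sum by setting $\ell = p-j$, so that $\ell$ runs over $1,\dots,n-1$; this puts the strings inside the indicators into the template $v_{p-\ell+1}\cdots v_p w_1\cdots w_{n-\ell}$ used in the definition of $H$, and pulling out the common phase converts the remaining exponential factor into the leading coefficient $e^{-2\pi i \omega(|v|-n+1)}$. Summing the three contributions then gives the identity claimed in the lemma.

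The main obstacle here is not conceptual but purely bookkeeping: the straddling terms must be reindexed so that both the letters appearing inside the indicators and the resulting leading phase are consistent with the convention fixed in Definition \ref{def5.2} for $H$, and this is exactly the place where off-by-one errors most easily creep in. Once the substitution and the extraction of the common phase are executed carefully, the identity is an immediate consequence of the three-way decomposition together with the definitions of $\phi_{[k,n]}$ and $H$.
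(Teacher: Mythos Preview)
Your proposal is correct and is exactly the approach the paper takes: the paper's proof consists of the single sentence ``The proof can be seen by simply writing out the definition of the terms on the right hand side,'' and your three-range decomposition (window in $v$, window in $w$, window straddling the boundary) together with the reindexing $\ell=p-j$ is precisely that computation made explicit. Your caveat about the bookkeeping of the straddling phase is well placed, since matching the exponent convention in Definition~\ref{def5.2} with the leading factor $e^{-2\pi i\omega(|v|-n+1)}$ is the only place any care is needed.
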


\begin{proof}
The proof can be seen by simply writing out the definition of the terms on the right hand side.  
\end{proof}

  For the remainder of the section, our objective will be to find a bound for entries in an associated matrix product using an inductive process in order to bound the $\phi_{[k,n]}$ .

  \begin{definition} Let $1, \dots ,p \in A$ enumerate the letters in the alphabet $A$ and let $v^t$ denote the transpose of the vector $v$.  For $\omega \in [0,1)$ we  define  
\begin{equation*}
 \Psi_{m}^{[k,n]}(\omega)=[\phi_{[k,n]}(\beta^m(1),\omega), \cdots ,\phi_{[k,n]}(\beta^m(p),\omega)]^t   
\end{equation*}
where $[k,n]$ is the $k$'th cylinder of rank $n$.
\end{definition}

As in the case of rank one cylinders, if we can estimate the growth of $\phi_{[k,n]}(\beta^m(i),\omega)$ then we can obtain bounds for the twisted Birkhoff sums for the cylindrical function $f$. This is due to the fact that the words $\beta^m(i)$ are in some sense dense in the substitution subshift.\\
  \\*
  Remark:   A technicality to keep in mind is we need the length of the words $\beta^m(i)$ to be greater than $n$, the length of the cylinder.  The Perron-Frobenius theorem tells us that there are constants $c'$ and $c$ such that $c'\theta^m \leq |\beta^m(i)| \leq c \theta^m$ for each $i \in A$, where $\theta$ is the Perron-Frobenius eigenvalue.  Hence we will assume that the fixed length of the cylinder $n$ satisfies $n \leq c' \theta^m \leq |\beta^m(i)|$. \\
  \\*
  
  In parallel with \cite{bufetov2014modulus} we begin our estimates by rewriting the relationship in Lemma \ref{lemma6.1} as a matrix product with some error term.  
\begin{definition} Let $\omega \in [0,1)$ and let the length of the substitution cylinders be a fixed $n$. Define 
 \begin{center}
    $\Pi_m(\omega) := [\Psi_m^{[1,n]}(\omega) \cdots \Psi_m^{[J_n,n]}(\omega)]$.
    \end{center}
\end{definition}

 \begin{lemma}  There is a matrix $M_{m-1}(\omega)$ and a vector $E_{m-1}(\omega)$ with $|E_{m-1}| \leq Kn$ entrywise such that for \quad $ K' + m \geq  \log_{\theta}(n)$ \quad and $\omega \in [0,1)$ we have the following relation:
 \begin{center}
    $\Pi_m(\omega)=M_{m-1}(\omega) \Pi_{m-1}(\omega)+E_{m-1}(\omega)$.
\end{center}
\label{lemma5.2}
\end{lemma}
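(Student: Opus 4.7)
The natural strategy is to iterate Lemma \ref{lemma6.1} along the decomposition $\beta^m(i)=\beta^{m-1}(a_1)\beta^{m-1}(a_2)\cdots\beta^{m-1}(a_{k_i})$, where $\beta(i)=a_1a_2\cdots a_{k_i}$ is the one-step image of the letter $i\in A$. The Perron--Frobenius estimate $|\beta^{m-1}(a_\ell)|\geq c'\theta^{m-1}$, combined with the hypothesis $K'+m\geq\log_\theta n$ (with, say, $K':=1-\log_\theta c'$), guarantees $|\beta^{m-1}(a_\ell)|\geq n$, so that Lemma \ref{lemma6.1} may be legitimately applied at every step of the telescoping.

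Setting $L_0:=0$ and $L_\ell:=|\beta^{m-1}(a_1)|+\cdots+|\beta^{m-1}(a_\ell)|$, I would prove by induction on $j\leq k_i$ the identity
$$\phi_{[k,n]}\bigl(\beta^{m-1}(a_1)\cdots\beta^{m-1}(a_j),\omega\bigr)=\sum_{\ell=1}^{j}e^{-2\pi i\omega L_{\ell-1}}\phi_{[k,n]}\bigl(\beta^{m-1}(a_\ell),\omega\bigr)+\sum_{\ell=2}^{j}e^{-2\pi i\omega(L_{\ell-1}-n+1)}H_\ell,$$
where $H_\ell:=H(\beta^{m-1}(a_1)\cdots\beta^{m-1}(a_{\ell-1}),\beta^{m-1}(a_\ell),\omega,n,k)$. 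Each induction step is just Lemma \ref{lemma6.1} applied with $v$ equal to the concatenation of the first $j-1$ blocks and $w=\beta^{m-1}(a_j)$. Taking $j=k_i$ and grouping terms letter by letter, I would define
$$M_{m-1}(\omega)_{a,i}:=\sum_{\ell:\,a_\ell=a}e^{-2\pi i\omega L_{\ell-1}},$$
which turns the first sum into the $i$-th entry of $M_{m-1}(\omega)\Psi_{m-1}^{[k,n]}(\omega)$ and identifies the second sum as the $i$-th entry of a vector $E_{m-1}^{[k,n]}(\omega)$. Concatenating these column vectors over $k=1,\ldots,J_n$ produces the claimed identity $\Pi_m(\omega)=M_{m-1}(\omega)\Pi_{m-1}(\omega)+E_{m-1}(\omega)$.

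For the entrywise error bound, Definition \ref{def5.2} immediately yields $|H_\ell|\leq n-1$, while the number of $H$-terms appearing in a given entry is $k_i-1\leq\max_{i\in A}(|\beta(i)|-1)=:K$, a constant depending only on the substitution (for the Chacon substitution $K=3$). Since every exponential factor has unit modulus, the triangle inequality delivers $|E_{m-1}(\omega)|\leq Kn$ entrywise, as required.

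The only subtle point, which I would emphasize rather than treat as a genuine obstacle, is that the exponents $L_\ell$ and the letter sequence $(a_\ell)_\ell$ are fixed by the action of $\beta$ on $i$ alone and therefore do not depend on the cylinder $[k,n]$; this is exactly what makes $M_{m-1}(\omega)$ a single $p\times p$ matrix acting simultaneously on every column of $\Pi_{m-1}(\omega)$. Everything else is bookkeeping built directly out of Lemma \ref{lemma6.1} and the Perron--Frobenius length estimate.
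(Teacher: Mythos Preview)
Your proposal is correct and follows essentially the same route as the paper: both decompose $\beta^m(b)=\beta^{m-1}(u_1^{(b)})\cdots\beta^{m-1}(u_{k_b}^{(b)})$ and telescope Lemma~\ref{lemma6.1} across the blocks, with the $H$-terms absorbed into $E_{m-1}$ and the exponentials grouped by letter to produce $M_{m-1}$. Your write-up is in fact more explicit than the paper's on the Perron--Frobenius justification for applicability of Lemma~\ref{lemma6.1}, on the error bound, and on the key observation that $M_{m-1}$ is cylinder-independent; the only cosmetic slip is that your indices in $M_{m-1}(\omega)_{a,i}$ are transposed relative to what is needed for $M_{m-1}\Psi_{m-1}^{[k,n]}$ to have the correct $i$-th entry (you want the $(i,a)$ entry, matching the paper's $(b,c)$ convention).
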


\begin{proof}
  The result will follow after looking at the expansion  of $\phi_{[k,n]}(\beta^m(b), \omega)$.

Let $\beta(b)=u_1^{(b)} \cdots u_{k_b}^{(b)}$   with $u_i^{(b)} \in A$.  
 
\begin{align*}
    \phi_{[k,n]}(\beta^m(b), \omega) &= \sum_{j=1}^{k_b} \exp \left[[-2 \pi i \left(|\beta^{m-1}(u_1^{(b)})\omega| + \cdots + |\beta^{m-1}(u_{j-1}^{(b)})|  \right)\omega \right]\phi_{[k,n]}(\beta^{m-1}(u_j^{(b)}), \omega) \\
    & \quad + \sum_{j=1}^{k_b-1}e^{i \alpha_j} H \left(\beta^m(u_j^{(b)}) , \beta^m(u_{j+1}^{(b)}) \right),
\end{align*}

where the $\alpha_j \in \mathbb{R}$ is calculated as in Lemma \ref{lemma6.1} and $H$ is from Definition \ref{def5.2}.  This implies the following definition for $M_{m-1}(\omega)$:

\begin{equation*}
   M_{m-1}(\omega)_{b,c}=\sum \limits_{j \leq  |\beta(b)|:  u_j^{(b)}=c} \exp \left[-2 \pi i \left(|\beta^{m-1}(u_1^{(b)})| + \cdots + |\beta^{m-1}(u_{j-1}^{(b)})|  \right)\omega \right].
\end{equation*}
Now define 
\begin{equation*}
E_m = \left[
\sum_{j=1}^{k_1-1} e^{i \alpha_j} H \left(\beta^m(u_j^{(1)}), \beta^m(u_{j+1}^{(1)})\right), \cdots, \sum_{j=1}^{k_p-1} e^{i \alpha_j} H \left(\beta^m(u_j^{(p)}), \beta^m(u_{j+1}^{(p)})  \right)
\right]^t
\end{equation*}
where $p$ is the size of the alphabet for the substitution.   
\end{proof}

Hence for $m + K' \geq  \log_{\theta}(n)$ we can use induction on Lemma \ref{lemma5.2} to get the following formula.

\begin{equation}
 \Pi_m(\omega)= \left[ \prod \limits_{j=n+1}^m M_j(\omega) \right] \Pi_n(\omega) + \sum \limits_{k=n+1}^m M_k(\omega) E_{k-1}(\omega).
 \label{piece1}
\end{equation}

To proceed with bounding $\Pi_m(\omega)$ we need to bound both the $E_{m-1}$ and the product of the $M_{k}$.  The majority of the rest of this section will be dedicated towards finding a bound on the product of the $M_j(\omega)$.

\begin{theorem} There is a constant $0 < c' < 1$ dependent only on the substitution such that 
\begin{center}
$\left[ \prod \limits_{j=n+1}^m M_j(\omega) \right] \hat{1} \leq \prod \limits_{k=n+1}^{m} (1-c' \| \omega  \|^2) (S^t \hat{1})^{m-n}$.
\end{center}
entrywise.
\label{veech}
\end{theorem}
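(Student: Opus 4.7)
My plan is to set $v_k := \prod_{j=n+1}^k M_j(\omega)\,\hat{1}$ and prove by induction on $k-n$ that $|v_k|_b \leq (1-c'\|\omega\|^2)^{k-n}[(S^t)^{k-n}\hat{1}]_b$ for every letter $b$. The baseline input is the trivial entrywise bound $|M_j(\omega)u|_b \leq [S^t|u|]_b$, which follows from the triangle inequality together with the defining fact that each nonzero entry $M_j(\omega)_{b,c}$ is a sum of exactly $(S^t)_{b,c}$ unimodular complex numbers (one per occurrence of the letter $c$ in $\beta(b)$). Iterating this alone recovers only the trivial $|v_k|_b \leq [(S^t)^{k-n}\hat{1}]_b$, so the real task is to extract a multiplicative $(1-c'\|\omega\|^2)$ at each induction step.

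To produce the gain, I would compute
\begin{equation*}
|[M_j(\omega) u]_b|^2 = \sum_{\ell} u_{u_\ell^{(b)}}^2 + 2\sum_{\ell<\ell'} u_{u_\ell^{(b)}} u_{u_{\ell'}^{(b)}}\cos\bigl(2\pi(\tau_\ell - \tau_{\ell'})\omega\bigr),
\end{equation*}
where $\tau_\ell = |\beta^{j-1}(u_1^{(b)})|+\cdots+|\beta^{j-1}(u_{\ell-1}^{(b)})|$ is the prefix length appearing in Lemma \ref{lemma5.2}. Comparing with $\bigl([S^t|u|]_b\bigr)^2$ produces a deficit $2\sum_{\ell<\ell'} u_{u_\ell^{(b)}} u_{u_{\ell'}^{(b)}}(1-\cos(2\pi(\tau_\ell-\tau_{\ell'})\omega))$, and the elementary inequality $1-\cos(2\pi x)\geq c\|x\|^2$ converts this into a bound of order $\|(\tau_\ell-\tau_{\ell'})\omega\|^2$. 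Provided I can exhibit, row by row, at least one pair of positions $(\ell,\ell')$ in $\beta(b)$ whose phase difference satisfies $\|(\tau_\ell-\tau_{\ell'})\omega\| \geq c\|\omega\|$, taking square roots yields the desired $(1-c'\|\omega\|^2)$ factor, and the induction closes cleanly.

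The main obstacle is the quantitative Veech input: for each $j$ and each row $b$, I must point to one specific $\tau_\ell-\tau_{\ell'}$ whose product with $\omega$ stays away from $\mathbb{Z}$ by at least $c\|\omega\|$. The hardest row is $b=1$, since $\beta(1)=12$ offers only the single difference $|\beta^{j-1}(1)|\omega$, so cancellation has to come from precisely this term. To handle this I would exploit the structural feature of the Chacon substitution recalled in Section 2: the subdominant eigenvalue of $S$ has modulus exactly one, so the iterates $S^j(\omega\hat{1})$ cannot drift arbitrarily close to the integer lattice, yielding a bound of the form $\|S^j(\omega\hat{1})\|_{\mathbb{Z}} \geq c\|\omega\|$ uniformly in $j$, which unwinds into the required estimate on the single length $|\beta^{j-1}(a)|\omega$. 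Carrying this Veech estimate uniformly through all $j$ and all $b$, and making sure the gain survives the square root with a constant $c'$ independent of everything but the substitution, is what I expect to be the principal technical hurdle, closely parallel to the arguments in Section 3 of \cite{bufetov2014modulus} and Section 6 of \cite{avila2021quantitative}.
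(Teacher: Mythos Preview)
Your inductive framework matches the paper's, but the mechanism you propose for extracting the factor $(1-c'\|\omega\|^2)$ at each step does not close, and the paper uses a different device that you are missing. The cosine expansion you write for $|[M_j(\omega)u]_b|^2$ is only valid when $u$ is real. After the first step $v_{k-1}=\prod_{j=n+1}^{k-1}M_j(\omega)\,\hat 1$ is complex; writing $u_a=r_ae^{i\phi_a}$, the cross term for positions $\ell<\ell'$ becomes $r_{u_\ell^{(b)}}r_{u_{\ell'}^{(b)}}\cos\bigl(\phi_{u_\ell^{(b)}}-\phi_{u_{\ell'}^{(b)}}-2\pi(\tau_\ell-\tau_{\ell'})\omega\bigr)$. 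When $u_\ell^{(b)}\ne u_{\ell'}^{(b)}$ the uncontrolled phases $\phi$ can absorb the $\omega$-contribution entirely, so cross-letter pairs yield no guaranteed deficit and your induction hypothesis on $|v_{k-1}|$ alone is not enough to produce a gain. Only \emph{same-letter} pairs survive, since then the $\phi$'s cancel. This is exactly why the paper extracts the gain \emph{entrywise}: it bounds a single entry $|M_k(\omega)_{b,c}|\le (S^t)_{b,c}-\tfrac12\|\omega|\beta^k(v)|\|^2$ by exhibiting two occurrences of the same letter $c$ in $\beta(b)$ separated by a return word $v$, and then iterates the real non-negative matrices $|M_j|$ against positive vectors.

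Your treatment of the row $b=1$ then fails twice over. First, $\beta(1)=12$ has no repeated letter, so the entrywise gain is unavailable and your only candidate pair is cross-letter. Second, even granting your single phase difference $|\beta^{j-1}(1)|\omega$, the Veech bound you invoke does not ``unwind'' into it: $\|(S^t)^j(\omega\hat 1)\|_{\mathbb Z}\ge c\|\omega\|$ controls only the \emph{maximum} coordinate $\max_a\|\,|\beta^j(a)|\omega\,\|$, not the specific $a=1$ you need (for instance $|\beta(1)|=2$ gives $\|\,|\beta(1)|\omega\,\|=0$ at $\omega=\tfrac12$). The paper fixes both problems simultaneously by passing to the power $\beta^3$, which defines the same subshift but makes every $\beta^3(b)$ long enough to contain return words $v_1=12$, $v_2=012$, $v_3=01201$ whose population vectors generate $\mathbb Z^3$. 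At each step $k$ one then selects the $v_i$ maximising $\|\,|\beta^k(v_i)|\omega\,\|$; the spanning property together with the eigenvalue-$1$ eigenvector of $S$ forces this maximum to be at least a fixed multiple of $\|\omega\|$, and since the chosen return word sits inside every row the gain applies uniformly in $b$.
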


  Since  $\omega$ will be fixed, we  occasionally omit it in our notation.  Our proof will depend on a series of quantitative lemmas.  Furthermore, inequalities comparing matrices and vectors are understood to be entrywise.  We will also denote $|A|$ to be the matrix with entries equal to the absolute value of those from $A$.

\begin{lemma}
The entries of $\Pi_n(\omega)$ are bounded above by $c \theta^n$, where $c>0$ is dependent only on the substitution.
\end{lemma}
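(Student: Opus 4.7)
The plan is to bound each entry of $\Pi_n(\omega)$ directly by the triangle inequality, without any cancellation between the oscillating exponentials, and then invoke Perron-Frobenius to control the length of $\beta^n(i)$.

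More concretely, an arbitrary entry of $\Pi_n(\omega)$ has the form $\phi_{[k,n]}(\beta^n(i),\omega)$ for some letter $i \in A$ and some index $k \in \{1,\dots,J_n\}$. First I would unfold the definition
\begin{equation*}
\phi_{[k,n]}(\beta^n(i),\omega) = \sum_{j=0}^{|\beta^n(i)|-n} 1_{[k,n]}(T^j \beta^n(i)) \, e^{-2\pi i \omega j},
\end{equation*}
and apply the triangle inequality. Since each term is either $0$ or of modulus $1$, we get
\begin{equation*}
\bigl|\phi_{[k,n]}(\beta^n(i),\omega)\bigr| \leq \sum_{j=0}^{|\beta^n(i)|-n} 1_{[k,n]}(T^j \beta^n(i)) \leq |\beta^n(i)| - n + 1 \leq |\beta^n(i)|.
\end{equation*}

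Next, I would invoke the Perron-Frobenius theorem applied to the (primitive) substitution matrix $S$, as noted in the remark following the definition of $\Pi_m$: there is a constant $c>0$, depending only on $\beta$, such that $|\beta^n(i)| \leq c\,\theta^n$ for every letter $i$ and every $n\geq 0$, where $\theta$ is the Perron-Frobenius eigenvalue. Combining with the previous display yields $|\phi_{[k,n]}(\beta^n(i),\omega)| \leq c\,\theta^n$ uniformly in $k$, $i$, and $\omega$, which is the claim.

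There is really no obstacle here: the bound is a pointwise triangle inequality, and the only quantitative input is the standard Perron-Frobenius growth rate of word lengths, which has already been cited in the paper. The estimate is of course wasteful (it ignores any cancellation in $\omega$), but this crude bound is what will feed into the inductive argument in the subsequent matrix-product estimates, where the gain in $\omega$ comes from the product $\prod (1 - c'\|\omega\|^2)$ in Theorem \ref{veech}.
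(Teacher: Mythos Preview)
Your proof is correct and follows essentially the same route as the paper: unfold the definition of $\phi_{[k,n]}(\beta^n(i),\omega)$, bound the sum trivially by the number of terms, and invoke Perron--Frobenius to control $|\beta^n(i)|$. The paper's argument is identical in spirit, only slightly terser.
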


 \begin{proof}
  The matrix $\Pi_{n}(\omega)$ has entries equal to 

\begin{center}
    $\phi_{[k,n]}(v,\omega)=\sum_{j=0}^{|v|-n} 1_{[k,n]}(T^jv)e^{-2 \pi i \omega j}$
\end{center}

with $v=\beta^n(i)$.  Hence we have the entrywise bound 
\begin{equation*}
  |\phi_{[k,n]}(v,\omega)|\leq |v|-n \leq c \theta^n 
\end{equation*}
for $\Pi_n(\omega)$.

\end{proof}

\begin{lemma}
$|\sum \limits_{k=n}^m M_k(\omega) E_{k-1}(\omega)| \leq m n C_S$ where $C_S>0$ depends only on the substitution.
\label{piece2}
\end{lemma}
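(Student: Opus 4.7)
The plan is pure entrywise triangle-inequality bookkeeping: since the target bound $mnC_S$ is linear in both variables, no $\omega$-cancellation needs to be extracted, and it suffices to control $|E_{k-1}(\omega)|$ and $|M_k(\omega)|$ separately and then multiply. The $\omega$-dependence in both quantities sits entirely inside unit-modulus exponential factors, so taking absolute values removes it immediately and all constants that appear depend only on the substitution.

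First I would bound $|E_{k-1}(\omega)|$ entrywise. By Definition \ref{def5.2}, $|H(v,w,\omega,n,k)| \leq n-1$ because $H$ is a sum of $n-1$ terms of modulus at most one. From the explicit formula for $E_{k-1}$ given in the proof of Lemma \ref{lemma5.2}, the $b$-th component is a sum of $|\beta(b)|-1$ values of $H$ multiplied by unit-modulus prefactors $e^{i\alpha_j}$. Writing $L_\beta := \max_{a \in A}|\beta(a)|$, which depends only on the substitution, the triangle inequality gives $|E_{k-1}(\omega)| \leq (L_\beta - 1)(n-1)\,\hat{1}$ entrywise (this is exactly the $|E_{m-1}| \leq Kn$ already noted in Lemma \ref{lemma5.2}, with $K = L_\beta - 1$).

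Next I would bound $|M_k(\omega)|$ by the substitution matrix. The explicit expression
\begin{equation*}
M_{k-1}(\omega)_{b,c} = \sum_{j \leq |\beta(b)|:\, u_j^{(b)} = c} \exp[\,\cdots\,]
\end{equation*}
shows that $|M_k(\omega)|_{b,c}$ is at most the number of occurrences of the letter $c$ in $\beta(b)$, i.e.\ the corresponding entry of $S^t$. Each row sum of $|M_k(\omega)|$ is therefore bounded by $|\beta(b)| \leq L_\beta$. Combining this with the previous step, for every $k$ and every coordinate $b$,
\begin{equation*}
\bigl|M_k(\omega)E_{k-1}(\omega)\bigr|_b \;\leq\; \sum_c |M_k(\omega)|_{b,c}\,|E_{k-1}(\omega)|_c \;\leq\; L_\beta(L_\beta - 1)(n-1).
\end{equation*}

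Finally, summing over $k = n, \ldots, m$ and applying the triangle inequality yields
\begin{equation*}
\left| \sum_{k=n}^{m} M_k(\omega) E_{k-1}(\omega) \right| \;\leq\; (m-n+1)\,L_\beta(L_\beta-1)(n-1)\,\hat{1} \;\leq\; m\,n\,C_S\,\hat{1},
\end{equation*}
with $C_S := L_\beta(L_\beta - 1)$ depending only on the substitution. There is no substantive obstacle here; the only point that requires a moment of care is verifying that $C_S$ is truly $\omega$-, $m$-, and $n$-independent, and this is automatic because the passage to absolute values discards every $\omega$-dependent phase in both $M_k$ and $E_{k-1}$.
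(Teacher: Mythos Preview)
Your proof is correct and takes essentially the same approach as the paper: bound $|M_k(\omega)|$ entrywise by $S^t$, bound $|E_{k-1}(\omega)|$ entrywise by a constant times $n$, multiply, and sum the $m-n+1$ terms. The paper's version is terser (it simply cites $|M_k(\omega)|\le S^t$, $|E_k(\omega)|\le Kn$, and sets $C_S=\|S^t\|$), while you have spelled out the constants explicitly as $C_S=L_\beta(L_\beta-1)$; the arguments are otherwise identical.
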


\begin{proof}
This follows from the fact that $|M_k(\omega)| \leq S^t$, where $S$ is the substitution matrix, and $|E_k(\omega)| \leq n$.  Now set $C_S = \| S^t\|$

\end{proof}

\begin{definition} Let $v $ be an $m$-dimensional real vector.  Define 

\begin{center}
$\|v \|_{\mathbb{Z}}=\max_{i} \|v_i \|_{\mathbb{Z}}$
\end{center}
to be the maximum of the distances between each coordinate $v_i$ to the integers.
\label{definitionreturn}
\end{definition}
We will sometimes abbreviate the notation to $\| v\|$.  Note that in the one dimensional case this reduces to just the distance of the point to the integers.

Our proof will need the existence of words $v_i$ such that for each $b \in A$ there is a word $v_i$ and a letter $c_i$ such that $v_i$ starts with $c_i$ and $v_ic_i$ is contained in $\beta(b)$.   We will call the $v_i$ return words.  Hence for each $b$ we can define $p_i^{(b)}$ and $q_i^{(b)}$ so that  $\beta(b)=p_i^{(b)}v_ic_iq_i^{(b)}$.  Note that the $M_n(\omega)$ have entries that are trigonometric polynomials with coefficients equal to $1$ with terms not more than the corresponding integer entry in the transpose of the substitution matrix $S^t$.    Using the definition of $M_n$
we have that the terms $e^{-2 \pi i |p_i^{(b)}| \omega}$ and $e^{-2 \pi i |p_i^{(b)}v_i|\omega}$ are both contained in the entry $M_n(\omega)_{b,c_i}$.  By excluding them through subtraction and then adding the absolute value of their sum we get 
\begin{center}
    $|M_n(\omega)_{b,c_i}| \leq (S^t)_{b,c_i}-2+|e^{2 \pi i \omega |\beta^n(v_i)|}+1|$.
\end{center}

The inequality  
\begin{center}
    $|1+e^{2 \pi i  t}| \leq 2 - \frac{1}{2}\|t\|^2$
\end{center}

then implies 

\begin{center}
    $|M_n(\omega)_{b,c_i}| \leq (S^t)_{b,c_i}-\frac{1}{2} \|\omega|\beta^n(v_i)| \|^2$
\end{center}

for each $i$.  \\

From  \cite{bufetov2014modulus} we get, for $\hat{x}=(x_1, \dots, x_n)>0$ a positive vector and $v$ a fixed return word that starts with the letter $c \in A$, 

\begin{equation}
\begin{aligned}
(|M_k|\hat{x})_b &= \sum_{j=1}^p|M_k(b,j)|x_j \\
&\leq \sum_{j=1}^p S^t(b,j)x_j - \frac{1}{2}\|\omega |\beta^k(v)|\|^2 x_c \\
&\leq (1-c(\hat{x})\|\omega |\beta^k(v)|\|^2) \sum_{j=1}^p S^t(b,j)x_j \\
&= (1-c(\hat{x})\|\omega |\beta^k(v)|\|^2) (S^t\hat{x})_b,
\end{aligned}
\label{productphase}
\end{equation}

where $c(\hat{x})$ is defined by the equation
\begin{equation}
c(\hat{x})=\frac{x_c}{2m \max_j (S^t)_{b,j} \max_j x_j}. 
\label{c(x)}
\end{equation}
 
 We can use this formula inductively to get a bound for a product of the $M_k(\omega)$ . During each step of the induction we can pick a suitable return word $v_k$  to control the dynamics of the quantity $\|\omega|\beta^k(v)| \|$.  The choice of $v_k$ will be determined in the next few paragraphs.  

Note that equation (\ref{productphase}) implies that entrywise
\begin{equation}
|M_{k+1}M_{k}| \hat{x} \leq |M_{k+1}| \left((1-c(\hat{x}) \|\omega |\beta^k(v)|\|^2)S^t \hat{x}  \right). 
\end{equation}
If we use this  along with the substitution $\hat{x} \to S^t \hat{x}$  we get the entrywise inequality

\begin{equation}
M_{k+1}M_{k} \hat{x} \leq \left(1-c(\hat{x}) \| \omega |\beta^k(v)| \|^2) (1-c(S^t\hat{x}) \| \omega |\beta^k(v)| \|^2) \right)(S^t)^2 \hat{x}.
\end{equation}

Now set $\hat{x} = \hat{1}$.  The Perron-Frobenius theorem implies that $ c=\inf_n \{c((S^t)^n \hat{1}) \}$ is positive.  Using the above calculations iteratively we obtain the bound 

\begin{equation}
   | \prod \limits_{j=n+1}^m M_j(\omega) | \hat{1} \leq \prod \limits_{k=n+1}^{m} (1-c \| \omega |\beta^k(v_k) | \|^2) (S^t \hat{1})^{m-n}. 
\end{equation}
\-\\

The following is found in \cite{bufetov2021holder}.  Since the substitution is primitive, the dynamical system $X_{\beta}$ is equal to $X_{\beta^n}$ for each $n \geq 1$.  Recall that return words were defined after Definition \ref{definitionreturn}.  It is possible that there are no return words when examining $\beta$, however, we can pass to $\beta^n$ and obtain the existence of return words if needed.  In the case of the Chacon substitution we note that all of $v_1=12$, $v_2=012$ and $v_3=01201$ are return words for $\beta^3$.  If $v$ is a word in the substitution we define the population vector $l(v)$ to be equal to the vector whose $i$'th entry is equal to number of the letter $i$ in $v$.  Notice that the population vectors for these return words generate $\mathbb{Z}^3$.

This implies that there are $a_{i,j} \in \mathbb{Z}$ such that $\sum_{i=1}^ka_{j,k}l(v_k)=e_j $.  Hence
\begin{equation}
\begin{split}
\| \hat{x} \| = \max_{i} \| x_i \|_{\mathbb{Z}} & = \max_i \| \langle \sum_{j=1}^{k} a_{i,j} l(v_j), \hat{x} \rangle \|_{\mathbb{Z}} \\
 &\leq \max_{i \leq m} \sum_{j=1}^k |a_{i,j}| \cdot \max_{j \leq k} | \langle l(v_j) , \hat{x} \rangle  | \\
\end{split}
\end{equation}
This shows that there is a $C>1$ independent of $x$ such that 
\begin{equation}
    C^{-1}\| \hat{x} \| \leq \max_{j \leq k} | \langle l(v_j) , \hat{x} \rangle  | 
\end{equation}

The Chacon map is conjugate to the substitution given by 
\begin{center}
 $0 \to 0012$ \\ $ 1 \to 12$ \\ $2 \to 012$
\end{center}
The substitution matrix $S$ is 
\begin{center}
$
    \begin{bmatrix}
    2 & 0 & 1 \\
    1 & 1 & 1 \\
    1 & 1 & 1
    \end{bmatrix}
    $
\end{center}

and  $b = [-1,1,1]^t$ is an eigenvector for $S$ with eigenvalue $1$.  Now define  

\begin{equation}
    \hat{x}_k(\omega) := \omega (S^t)^k \hat{1}.
\end{equation}

\begin{lemma}
There is an $\alpha  \in (0,1)$ such that for all $k \in \mathbb{Z}^+$ and all $\omega \in [0,1)$ we have  $\| \hat{x}_k(\omega)\|_{\mathbb{Z}} \geq \alpha \| \omega\|_{\mathbb{Z}}$
\end{lemma}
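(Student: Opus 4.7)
The plan is to leverage the fact, recorded just before the lemma, that $b=[-1,1,1]^t$ is an eigenvector of $S$ with eigenvalue $1$. Two features of this pair $(b,1)$ will do all the work: the eigenvalue is \emph{exactly} $1$, and the entries of $b$ are \emph{integers}. Together these will produce an integer-linear identity among the coordinates of $\hat{x}_k(\omega)$ which survives passage to the norm $\|\cdot\|_{\mathbb{Z}}$ uniformly in $k$.

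The first step is the duality computation: since $(S^t)^k$ can be transposed back onto $b$ and $Sb=b$, one obtains
\begin{equation*}
\langle b, \hat{x}_k(\omega)\rangle \;=\; \omega\,\langle b,(S^t)^k\hat{1}\rangle \;=\; \omega\,\langle S^k b,\hat{1}\rangle \;=\; \omega\,\langle b,\hat{1}\rangle \;=\; \omega,
\end{equation*}
using $\langle b,\hat{1}\rangle = -1+1+1 = 1$ at the last step. Writing $\hat{x}_k(\omega)=(x_1,x_2,x_3)$, this is the scalar identity $-x_1+x_2+x_3=\omega$, independent of $k$.

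The second step is to invoke the triangle inequality and the sign-symmetry of $\|\cdot\|_{\mathbb{Z}}$, both of which are immediate from the fact that $\mathbb{Z}$ is a subgroup of $\mathbb{R}$. From the identity above,
\begin{equation*}
\|\omega\|_{\mathbb{Z}} \;=\; \|-x_1+x_2+x_3\|_{\mathbb{Z}} \;\leq\; \sum_{i=1}^{3}\|x_i\|_{\mathbb{Z}} \;\leq\; 3\,\|\hat{x}_k(\omega)\|_{\mathbb{Z}},
\end{equation*}
so any $\alpha\leq 1/3$ works, independently of $k$ and of $\omega$.

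There is no real obstacle here: the only insight required is that the combination ``integer eigenvector $+$ eigenvalue exactly equal to $1$'' converts the spectral identity $Sb=b$ into an \emph{integer-linear} relation on coordinates, which is automatically compatible with $\|\cdot\|_{\mathbb{Z}}$. This is precisely the structural property that the introduction identifies as responsible for the logarithmic (rather than polynomial) rate of weak mixing for the Chacon map; by contrast, if the subdominant eigenvalue of $S$ lay strictly inside the unit disk one could not hope for such a $k$-uniform lower bound.
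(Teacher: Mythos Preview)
Your proof is correct and uses essentially the same idea as the paper: both hinge on the identity $\langle b,\hat{x}_k(\omega)\rangle=\omega$ (from $Sb=b$ and $\langle b,\hat{1}\rangle=1$) together with the triangle inequality for $\|\cdot\|_{\mathbb{Z}}$ via the integer entries of $b$. The paper wraps this same inequality in an unnecessary contradiction argument, whereas you present it directly and obtain the explicit constant $\alpha=1/3=\big(\sum_i|b_i|\big)^{-1}$.
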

\begin{proof}
If not then there are sequences $k_j$ and $\omega_j > 0$ such that 

\begin{equation*}
\|\hat{x}_{k_j}(\omega) \| \leq \frac{1}{j} \| \omega_j\|.
\end{equation*}
   Note that $\langle \hat{x}_{k_j}(\omega), b \rangle= \omega$ since $\langle b, \hat{1}\rangle = 1$ and $Sb = b$.  This implies that 

\begin{equation*}
    \| \omega_j\|_{\mathbb{Z}} = \| \langle \hat{x}_{k_j}(\omega), b, \rangle \|_{\mathbb{Z}} \leq \left( \sum \limits_{i} |b_i| \right) \| x_{k_j}(\omega)\|_{\mathbb{Z}} \leq \frac{1}{j}  \left( \sum \limits_{i} |b_i| \right) \| \omega_j\|_{\mathbb{Z}}
\end{equation*}
for all $j$, a contradiction.
\end{proof}

It follows that 

\begin{equation}
   \| \omega (S^t)^k \hat{1}  \|_{\mathbb{Z}}= \| \hat{x}_k(\omega) \|_{\mathbb{Z}}   \geq \alpha \| \omega \|_{\mathbb{Z}} 
\end{equation}

for some $1 > \alpha > 0$.

Thus  we obtain 
\begin{equation}
\prod \limits_{k=n+1}^{m} (1-c(x_k) \| \omega |\beta^k(v_k) | \|^2) (S^t \hat{1})^{m-n} \leq \prod \limits_{k=n+1}^{m} (1-c' \| \omega  \|^2) (S^t \hat{1})^{m-n}.
\end{equation}
where $v_k \in \{12, 012, 01201\}$ is chosen to maximize $|\langle l(v_k), \hat{x}_k \rangle|$ and $c'=\alpha \cdot \inf_i \{c(x_i) \}>0$ by the Perron-Frobenius theorem.  
This completes the proof of Theorem \ref{veech}.  

\qedsymbol{} \\
 
\begin{theorem}
There is an independent constant $k>0$ such that for each $i$ and $j$
\begin{center}
    $(e_i)^t| \prod \limits_{j=n+1}^m M_j(\omega) |e_j \leq k \prod \limits_{j=n+1}^{m} (1-c' \| \omega  \|^2) \theta^{m-n}$
\end{center}
where $\{e_k \}$ is the standard basis.
\label{piece3}
\end{theorem}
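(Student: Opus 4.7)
The plan is to reduce the entrywise bound claimed here to the already-established row-sum bound from Theorem~\ref{veech}. The key observation is that $|\prod_{j=n+1}^m M_j(\omega)|$ has non-negative entries (it is a matrix of absolute values), so for any standard basis vectors $e_i, e_j$,
\begin{equation*}
e_i^t\, \Bigl|\prod_{j=n+1}^m M_j(\omega)\Bigr|\, e_j \;\leq\; \sum_{\ell} e_i^t\, \Bigl|\prod_{j=n+1}^m M_j(\omega)\Bigr|\, e_\ell \;=\; \Bigl(\Bigl|\prod_{j=n+1}^m M_j(\omega)\Bigr|\hat{1}\Bigr)_i .
\end{equation*}
That is, each individual entry of a non-negative matrix is dominated by the corresponding row sum, which is exactly the quantity controlled by Theorem~\ref{veech}.

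Applying Theorem~\ref{veech} to the right-hand side gives
\begin{equation*}
e_i^t\, \Bigl|\prod_{j=n+1}^m M_j(\omega)\Bigr|\, e_j \;\leq\; \prod_{k=n+1}^{m}(1-c'\|\omega\|^2)\; \bigl((S^t)^{m-n}\hat{1}\bigr)_i .
\end{equation*}
It then remains only to replace the vector factor $((S^t)^{m-n}\hat{1})_i$ by a scalar multiple of $\theta^{m-n}$. This is the Perron--Frobenius step: since $\beta$ is a primitive substitution, the spectral radius of $S^t$ is the Perron eigenvalue $\theta$, and there is a constant $k>0$ (depending only on the substitution) such that every entry of $(S^t)^r \hat{1}$ is at most $k\theta^r$ for all $r\geq 0$. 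This follows from the convergence $\theta^{-r}(S^t)^r \to L$ to a rank-one limit with strictly positive entries, giving uniform entrywise bounds $((S^t)^r\hat{1})_i \leq k\theta^r$.

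Combining these two inequalities yields the desired estimate
\begin{equation*}
e_i^t\, \Bigl|\prod_{j=n+1}^m M_j(\omega)\Bigr|\, e_j \;\leq\; k \prod_{k=n+1}^{m}(1-c'\|\omega\|^2)\;\theta^{m-n}.
\end{equation*}
There is no real obstacle in the argument; the only point requiring care is that $k$ must be chosen uniformly in $i$, $j$, $n$, and $m$, which is exactly what the Perron--Frobenius theorem guarantees once we observe that row sums control entries.
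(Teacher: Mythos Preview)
Your proposal is correct and follows essentially the same approach as the paper: bound each entry by the row sum, apply Theorem~\ref{veech}, and then control $((S^t)^{m-n}\hat{1})_i$ by $k\theta^{m-n}$ via Perron--Frobenius. The paper phrases the last step as the concrete identity $((S^t)^q\hat{1})_j = |\beta^q(j)| \leq k\theta^q$ (already noted earlier in the section), but this is the same Perron--Frobenius bound you invoke through the rank-one limit.
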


\begin{proof}
This follows from Theorem 5.3 and the inequality $\langle \hat{e}_j, (S^t)^q \rangle = |\beta^q(j) | \leq k \theta^q$,  

where $k>0$ depends only on the substitution.
\end{proof}
 
\begin{corollary}
Let $m \leq K \log_{\theta}N$ for some constant $K>1$  dependent only on the substitution.  Then, entrywise, there are consants $c''$ and $C_S$ dependent only on the substitution such that
\begin{center}
   $|\Pi_m(\omega)| \leq C_S N^{1-c''\| \omega\|^2} +C_S mn$.
\end{center}
\label{corollary6.1}
\end{corollary}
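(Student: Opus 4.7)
The plan is to start from the iterated identity (\ref{piece1}), namely
\begin{equation*}
\Pi_m(\omega) = \Bigl[\prod_{j=n+1}^m M_j(\omega)\Bigr] \Pi_n(\omega) + \sum_{k=n+1}^m M_k(\omega) E_{k-1}(\omega),
\end{equation*}
and bound the two terms on the right separately. The additive tail is handled directly by Lemma \ref{piece2}, which gives entrywise $\bigl|\sum_{k=n+1}^m M_k(\omega) E_{k-1}(\omega)\bigr| \leq C_S m n$; this produces the $C_S m n$ summand in the conclusion.

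For the main term, I would insert absolute values entrywise. Writing the $(i,k)$ entry of $[\prod M_j]\Pi_n$ as $\sum_l [\prod M_j]_{i,l} (\Pi_n)_{l,k}$ and bounding, I can apply Theorem \ref{piece3} to each factor $|[\prod M_j]_{i,l}|$ and the bound $|(\Pi_n)_{l,k}| \leq c\,\theta^n$ from the first lemma of this section. Summing over the (finitely many) letters $l \in A$ then yields the entrywise estimate
\begin{equation*}
\Bigl|[\prod_{j=n+1}^m M_j(\omega)]\Pi_n(\omega)\Bigr|_{i,k} \leq C'\,\theta^m \prod_{k=n+1}^{m} (1-c'\|\omega\|^2),
\end{equation*}
for a constant $C'$ depending only on the substitution.

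The last step is to convert this bound into a clean $N^{1-c''\|\omega\|^2}$ form using the standing hypothesis $m \leq K \log_\theta N$. The elementary inequality $1-x \leq e^{-x}$ gives $\prod_{k=n+1}^m (1-c'\|\omega\|^2) \leq e^{-c'(m-n)\|\omega\|^2}$, so that
\begin{equation*}
\theta^m \prod_{k=n+1}^{m}(1-c'\|\omega\|^2) \leq \theta^m \cdot e^{-c'(m-n)\|\omega\|^2}.
\end{equation*}
Since $\theta^m \leq N^K$ and, using the standing assumption $m + K' \geq \log_\theta n$ together with $m$ comparable to $\log_\theta N$, one has $m-n \geq c_0 \log_\theta N$ for a constant $c_0 > 0$, the exponential factor is at most $N^{-c''\|\omega\|^2}$ after absorbing $K$ and $c_0/\log\theta$ into $c''$. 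Combining with the first term yields the desired bound.

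The only delicate part is bookkeeping of constants: one must ensure that the exponent $1$ in $N^{1-c''\|\omega\|^2}$ (rather than $K$) emerges correctly, which forces $\theta^m$ to be genuinely comparable to $N$ up to the constant $C_S$, and that the gain $(1-c'\|\omega\|^2)^{m-n}$ indeed gives a true power saving $N^{-c''\|\omega\|^2}$. Once this calibration is done, the rest of the argument is a routine combination of entrywise matrix inequalities from the earlier lemmas and theorems of this section.
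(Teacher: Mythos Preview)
Your proposal is correct and follows essentially the same route as the paper: start from the iterated identity (\ref{piece1}), bound the tail by Lemma \ref{piece2}, bound the leading product via Theorem \ref{piece3} together with $|\Pi_n(\omega)| \leq c\theta^n$, and then convert $(1-c'\|\omega\|^2)^{m-n}$ into a negative power of $N$ using $m-n \gtrsim \log_\theta N$. The paper carries out the last step by writing $(1-c'\|\omega\|^2)^{\frac12\log_\theta N} = N^{\frac12\log_\theta(1-c'\|\omega\|^2)} \leq N^{-c''\|\omega\|^2}$ rather than via $1-x\leq e^{-x}$, and it simply imposes $n \leq \tfrac12\log_\theta N$ to guarantee $m-n \geq \tfrac12\log_\theta N$; your concern about getting exponent $1$ rather than $K$ is legitimate and is something the paper glosses over (in the later applications $\theta^m$ is genuinely comparable to $N$, so the issue does not arise).
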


\begin{proof}

Choose $n$ so that $n \leq \frac{1}{2} \log_{\theta}N$  Since the entries in the product are less than one, and since $m-n-1 \geq \frac{1}{2} \log_{\theta} N$ for large enough $N$, we have 

\begin{equation}
    \prod \limits_{k=n+1}^{m} (1-c' \| \omega  \|^2) \leq ((1-c' \| \omega  \|^2))^{\frac{1}{2} \log_{\theta}N} = N^{ \frac{1}{2}\log_{\theta}(1-c'\|\omega \|^2)} \leq N^{-c'' \| \omega\|^2}.
\end{equation}

The result then follows from Lemma \ref{piece1}, Lemma \ref{piece2}, Theorem \ref{veech}, and Theorem \ref{piece3}.

\end{proof}

The ingredients specific to the Chacon map to prove Theorem \ref{veech} include the existence of a set of return words whose population vectors span $\mathbb{Z}^n$. We also  used the property that the eigenvector for the substitution matrix corresponding to eigenvalue $1$ has non-trivial projection onto the vector with ones in all entries and that the substitution is primitive.  These assumptions are sufficient to extend the quantitative Veech criterion, or Theorem \ref{veech}, to other substitutions that satisfy the above conditions.    

\section{Estimates involving arbitrary strings}
In the previous section we found estimates for the twisted sums on words of the form $\beta^k(b)$ for some $b \in A$.  We will use this result to find bounds for the growth of twisted sums on arbitrary strings.  What is found in this section is partially a generalization of the techniques found in \cite{bufetov2014modulus}.  Let $[k,n]$ be the $k$'th cylinder of rank $n$.  We will consider cylindrical functions \
$f = \sum_k r_k 1_{[k,n]}$. Set

\begin{center}
    $\phi_f(v,\omega)= \sum_k r_k \phi_{[k,n]}(v,\omega)$
\end{center}

The following theorem is the building block of the paper.  Later we will see that cylindrical functions are able to approximate Lipschitz functions sufficiently enough to transfer the following bound.

\begin{theorem}

Let $n$ be the rank of the cylidnrical function $f$ and let $x[0,N-1]$ be the word comprised of the first $N$ terms of the string $x$.  Let $ n  \leq \frac{1}{2} \log_{\theta}N$ and $m \leq C  \log_{\theta}N$ for some $C > 0$ large enough. Then we have the following bound for the twisted sum:

\begin{equation*}
   |\phi_f(x[0,N-1], \omega)| 
    \leq 2L \cdot C_S' \cdot n \cdot  \| f\|_L \left( N^{1-c'\|\omega \|^2} 
    + 6(\log_{\theta}N)^2  +2 \log_{\theta}N + 1 \right)  
\end{equation*}

\label{theorem7.1}

\end{theorem}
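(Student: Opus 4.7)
The strategy is to hierarchically desubstitute the finite word $x[0,N-1]$ into substitution blocks $\beta^j(b)$, express $\phi_f(x[0,N-1],\omega)$ as a sum of block contributions via iterated use of Lemma \ref{lemma6.1}, and control each block using Corollary \ref{corollary6.1} together with Theorem \ref{veech}. Choose $m$ so that $\theta^m \asymp N$, which is compatible with the hypothesis $m \leq C\log_\theta N$. By the standard recognizability of the primitive substitution $\beta$, decompose
\[
x[0,N-1] = P_m \cdot \beta^m(b_1)\cdots \beta^m(b_{K_m}) \cdot S_m,
\]
where $P_m$ is a proper suffix and $S_m$ a proper prefix of $\beta^m$-images, and $K_m = O(1)$ since $|\beta^m(\cdot)| \asymp N$. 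Recursively decomposing $P_j$ and $S_j$ at levels $j = m-1,\ldots,0$, each prefix/suffix splits into at most $|\beta|_{\max}$ level-$(j-1)$ blocks followed by a strictly shorter tail. The result is a concatenation $x[0,N-1] = u_1 u_2 \cdots u_K$ with $u_\ell = \beta^{j_\ell}(b_\ell)$, $0 \leq j_\ell \leq m$, and $K = O(\log_\theta N)$.

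Iterating Lemma \ref{lemma6.1} along this concatenation gives
\[
\phi_f(x[0,N-1],\omega) = \sum_{\ell=1}^{K} e^{-2\pi i\omega\gamma_\ell}\,\phi_f\bigl(\beta^{j_\ell}(b_\ell),\omega\bigr) + \mathcal{E},
\]
where $\gamma_\ell = |u_1\cdots u_{\ell-1}|$ and $\mathcal{E}$ collects the $K-1$ boundary corrections $\sum_k r_k\, H(u_{\ell-1},u_\ell,\omega,n,k)$. Each $|H| \leq n$ entrywise and the coefficients satisfy $\sum_k |r_k| = O(n\|f\|_L)$ (using $J_n = O(n)$ together with the standard bound relating $\|f\|_\infty$ to $\|f\|_L$ on rank-$n$ cylinders), hence $|\mathcal{E}| = O\!\bigl(n(\log_\theta N)^2\,\|f\|_L\bigr)$, which produces the $(\log_\theta N)^2$ correction in the stated inequality.

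For the principal sum, a direct application of Corollary \ref{corollary6.1} would bound each block by $(\sum_k|r_k|)(C_S N^{1-c''\|\omega\|^2} + C_S j_\ell n)$, but summing $K = O(\log_\theta N)$ such bounds would introduce a spurious $\log_\theta N$ prefactor on the leading term. The remedy is to apply the level-sensitive estimate from Theorem \ref{veech}, giving $|\phi_{[k,n]}(\beta^{j_\ell}(b_\ell),\omega)| \lesssim \bigl(\theta(1-c'\|\omega\|^2)\bigr)^{j_\ell - n}$, which yields a geometric series in $j_\ell$ dominated by the top-level contribution $\asymp N^{1-c''\|\omega\|^2}$. Multiplying by the coefficient factor $\sum_k|r_k| = O(n\|f\|_L)$ produces the principal term $n\|f\|_L\cdot N^{1-c'\|\omega\|^2}$ (with $c' < c''$ absorbing constants). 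Combining with $\mathcal{E}$ from the previous step and the trivial estimates on the smaller boundary pieces assembles the stated inequality.

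The main obstacle is the combinatorial bookkeeping in Step~1: ensuring that the recursive desubstitution yields $K = O(\log_\theta N)$ blocks in total and correctly tracking their levels $j_\ell$. This rests on recognizability of the Chacon substitution and strict shrinkage of the prefix/suffix tails at each level. A secondary subtlety is the geometric-sum argument in Step~3, which is the essential reason why invoking Theorem \ref{veech} directly (rather than only the uniform Corollary \ref{corollary6.1}) is necessary to avoid an extra $\log_\theta N$ factor multiplying $N^{1-c'\|\omega\|^2}$ in the leading term.
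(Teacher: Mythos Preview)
Your proposal is correct and follows essentially the same approach as the paper. The paper uses the explicit Dumont--Thomas prefix--suffix decomposition $x[0,N-1]=u_0\beta(u_1)\cdots\beta^m(u_m)\beta^m(v_m)\cdots\beta(v_1)v_0$ (your recursive recognizability argument produces an equivalent block list), bounds the seam corrections $H$ exactly as you do, and then packages your geometric-series observation in a separate lemma with the auxiliary sequence $F_\omega(j)=\prod_{k=n+1}^{j}(1-c'\|\omega\|^2)$ and $\theta'=(1+\theta)/2$; your direct summation of the level-sensitive estimate from Theorem~\ref{veech} is the same computation without the packaging.
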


The main idea behind the proof is that the words $\beta^k(b)$ form the building blocks of the strings $x \in X$. The following lemma, found in \cite{10.1007/978-1-4612-3352-7_22}, gives an explicit statement. \\
\begin{lemma} Let $x \in X$ and $N \geq 1$.  Then 
\begin{center}
     $x[0,N-1]=u_0 \beta(u_1) \cdots \beta^m(u_m) \beta^m(v_m) \cdots \beta(v_1)v_0$
\end{center}
where $m \geq 0$ and the $u_i, v_i$, are respectively proper prefixes and suffixes of the words $\beta(b)$ for $b \in A$.  
\label{lemma7.2}
\end{lemma}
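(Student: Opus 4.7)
The plan is to derive the nested decomposition by iterated desubstitution, using the recognizability property of primitive aperiodic substitutions (Moss\'e's theorem) for $\beta$. The idea at each level is to use recognizability to uniquely identify each letter of the ambient element of $X$ as sitting inside a specific $\beta$-block, and then to peel off the partial blocks containing the left and right endpoints of the window $[0, N-1]$.

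At the first level, recognizability yields a unique $y^{(1)} \in X$ and an integer $0 \leq r_1 < |\beta(y^{(1)}_0)|$ with $x = T^{r_1}\beta(y^{(1)})$, grouping the letters of $x$ into level-$1$ blocks $\beta(y^{(1)}_j)$ up to the initial partial block. Intersecting this partition with $[0, N-1]$ gives
\[
x[0, N-1] \;=\; A_0 \cdot \beta\!\left(w^{(1)}\right) \cdot B_0,
\]
where $A_0$ is the portion of the initial partial level-$1$ block visible in the window, $B_0$ the analogous terminal fragment, and $w^{(1)}$ the factor of $y^{(1)}$ indexed by the complete $\beta$-blocks in between. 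By construction, both $A_0$ and $B_0$ are proper sub-words of $\beta(b)$ for suitable letters $b$ (one naturally a suffix, the other a prefix).

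For the next level, I would apply the same desubstitution to $y^{(1)}$ to obtain $w^{(1)} = A_1 \cdot \beta(w^{(2)}) \cdot B_1$ with $w^{(2)}$ a factor of $y^{(2)} \in X$. Substituting back and applying $\beta$ once yields $x[0, N-1] = A_0 \cdot \beta(A_1) \cdot \beta^2(w^{(2)}) \cdot \beta(B_1) \cdot B_0$. Iterating $m$ times produces
\[
x[0, N-1] \;=\; A_0 \cdot \beta(A_1) \cdots \beta^{m-1}(A_{m-1}) \cdot \beta^m\!\left(w^{(m)}\right) \cdot \beta^{m-1}(B_{m-1}) \cdots \beta(B_1) \cdot B_0.
\]
I would stop once $|w^{(m)}| \leq 1$, which occurs by $m = O(\log_\theta N)$ since $|\beta^m(w^{(m)})| \leq N$ together with the Perron--Frobenius lower bound $|\beta^m(a)| \geq c'\theta^m$ force $|w^{(m)}| \leq N/(c'\theta^m)$. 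If $w^{(m)}$ is empty, take $u_m$ and $v_m$ both empty; if $w^{(m)}$ is a single letter, split $\beta^m(w^{(m)}) = \beta^m(u_m)\beta^m(v_m)$ by assigning the letter to whichever of $u_m, v_m$ can serve as a proper prefix or suffix of some $\beta(b)$ (every letter of the Chacon alphabet admits at least one such role). Relabeling the $A_i, B_i$ in accordance with the stated convention then gives the decomposition of the lemma.

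The main obstacle is establishing recognizability, i.e. unique desubstitution, at each level. This is Moss\'e's theorem for primitive aperiodic substitutions: primitivity of $\beta$ is immediate from its substitution matrix having entrywise-positive powers, and aperiodicity is standard for the Chacon fixed point. Once recognizability is available, the rest of the argument is careful bookkeeping of the nested fragments and their prefix/suffix classification.
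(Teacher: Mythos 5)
Your overall strategy (iterated desubstitution, peel off the partial blocks at each end, terminate when the middle word is short) is the standard and correct route to this decomposition, but as written the inductive step has a genuine hole. You assume that at every level the window of $w^{(j)}$ meets at least two level-$1$ blocks of $y^{(j)}$, so that it splits as (suffix fragment)(complete blocks)(prefix fragment). When the occurrence of $w^{(j)}$ lies \emph{strictly inside a single block} $\beta(c)$ --- and note that recognizability, by its very uniqueness, forbids you from dodging this by passing to a different occurrence of the same word --- no such split exists: $w^{(j+1)}$ is empty, and no division of $w^{(j)}$ into two pieces need produce a suffix and a prefix of blocks. Your stopping rule $|w^{(m)}|\leq 1$ does not catch this, because the stuck word can have length $2$: for the Chacon substitution the factor $01$ occurs strictly inside $\beta(0)=0012$, so for $x$ positioned with $x[0,1]$ equal to that occurrence the very first step of your recursion fails. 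The repair is cheap: halt the recursion as soon as the window crosses no level-$1$ boundary, observe that the leftover word then has length at most $\max_b|\beta(b)|-2=2$, and check by inspection that every strictly interior factor ($0$, $1$, and $01$) is itself a proper prefix or suffix of some $\beta(b)$ (here $0$, $1$, $01$ are proper prefixes of $\beta(0)$, $\beta(1)$, $\beta(2)$ respectively), then assign it to the appropriate slot exactly as you already do for single letters. Also, if $w^{(m)}$ comes out empty you should drop the top index to $m-1$, so that --- as the paper asserts in the sentence following the lemma --- at least one of $u_m, v_m$ is nonempty.

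For comparison with the paper: there is no in-paper proof to match --- Lemma \ref{lemma7.2} is the classical prefix--suffix decomposition, quoted from \cite{10.1007/978-1-4612-3352-7_22} --- so your argument supplies what the paper outsources, and once patched as above it is a valid proof. Two further remarks. Moss\'e's theorem is heavier machinery than the statement needs: the lemma asserts only \emph{existence} of a decomposition, and existence of some desubstitution $x=T^{k}\beta(y)$ with $y\in X$ and $0\leq k<|\beta(y_0)|$ follows from primitivity alone by a compactness argument (every factor of $x$ lies in some $\beta^{n}(a)=\beta(\beta^{n-1}(a))$; extract a limit of the induced block structures), which frees the proof from any appeal to aperiodicity or injectivity and restores the freedom to decompose via a convenient occurrence. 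Finally, your derivation correctly makes the left fragments $A_i$ proper \emph{suffixes} and the right fragments $B_i$ proper \emph{prefixes} of words $\beta(b)$; no relabeling can reverse this, so the lemma's phrase ``respectively proper prefixes and suffixes'' is swapped relative to the true orientation. This is harmless for the paper, since the estimates of Section 7 only use that $u_j$ and $v_j$ are concatenations of fewer than $\max_b|\beta(b)|$ letters, but you should state the orientation you actually prove rather than promise a relabeling that cannot be carried out.
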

The words $u_i$ and $v_i$ may be empty but at least one of $u_n, v_n$ is non-empty.  We will also need the following lemma, which relates the $N$ in $x[0,N-1]$ with the $m$ in the prefix-suffix decomposition.  This next result is a corollary of the above lemma and the Perron-Frobenius theorem.\\
\-\\
\begin{lemma}  For all $b \in A$ there are constants $c,c'>0$ such that 

\begin{center}
    $c \theta^m \leq \min \limits_{b \in A}|\beta^m(b)| \leq N \leq 2 \max \limits_{b \in A}|\beta^{m+1}(b)| \leq 2c' \theta^{m+1}$
\end{center}
\end{lemma}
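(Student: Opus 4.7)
I would split the four inequalities into two parts: the outermost bounds $c\theta^m \leq \min_b|\beta^m(b)|$ and $\max_b|\beta^{m+1}(b)| \leq c'\theta^{m+1}$, which follow directly from Perron--Frobenius, and the inner comparisons with $N$, which use the prefix-suffix decomposition of Lemma \ref{lemma7.2}. Since $\beta$ is primitive, the substitution matrix $S$ has a simple leading eigenvalue $\theta>1$ with strictly positive left and right eigenvectors. Applying the standard Perron--Frobenius asymptotics $S^n e_b = \theta^n\,\langle L, e_b\rangle\, v + O(\theta_2^{\,n})$ to the identity $|\beta^n(b)| = \langle \hat{1}, S^n e_b\rangle$ produces uniform constants $c, c' > 0$ with $c\theta^n \leq |\beta^n(b)| \leq c'\theta^n$ for every $b \in A$ and every $n \geq 0$.

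For the lower bound $\min_b|\beta^m(b)| \leq N$, I would invoke the remark following Lemma \ref{lemma7.2}: at least one of $u_m, v_m$ is non-empty. Picking the first letter $a \in A$ of the non-empty one, the prefix $x[0,N-1]$ contains $\beta^m(a)$ as a subword (sitting inside either $\beta^m(u_m)$ or $\beta^m(v_m)$), and hence
\begin{equation*}
N \;=\; |x[0,N-1]| \;\geq\; |\beta^m(a)| \;\geq\; \min_{b \in A}|\beta^m(b)|.
\end{equation*}

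For the upper bound $N \leq 2\max_b|\beta^{m+1}(b)|$, the key is to read $m$ in Lemma \ref{lemma7.2} as the \emph{maximal} depth at which the decomposition is non-degenerate. At level $m+1$ the bi-infinite point $x$ is a concatenation of blocks $\beta^{m+1}(b_i)$. If $N$ exceeded $2\max_b|\beta^{m+1}(b)|$, the prefix $x[0,N-1]$ would span strictly more than two consecutive such blocks and therefore contain one full block $\beta^{m+1}(b_i)$ as a subword; but that would permit a nontrivial level-$(m+1)$ desubstitution, contradicting the maximality of $m$. As a cross-check, one can also bound $N$ directly by summing $|\beta^i(u_i)|, |\beta^i(v_i)| \leq \max_b|\beta^{i+1}(b)| \leq c'\theta^{i+1}$ over $0 \leq i \leq m$, giving $N \leq \frac{2c'\theta}{\theta-1}\theta^{m+1}$, the same bound up to a constant absorbable into $c'$.

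The main obstacle is the third step: one must verify that the decomposition supplied by Lemma \ref{lemma7.2} really corresponds to the deepest non-trivial desubstitution level, rather than an arbitrary one --- this is a convention inherited from the reference, and I would want to spell out why it holds (or, failing that, fall back on the explicit geometric-series argument, which produces the same asymptotic bound without needing this convention).
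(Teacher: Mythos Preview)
Your proposal is correct and matches the paper's approach: the paper simply states that this lemma ``is a corollary of the above lemma and the Perron--Frobenius theorem,'' without further detail, and your argument unpacks exactly those two ingredients. Your observation about the maximality of $m$ in the prefix--suffix decomposition is the only substantive point the paper leaves implicit, and your geometric-series fallback handles it cleanly.
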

We can now decompose the generalized twisted sum for $x[0,N-1]$ by using Lemma \ref{lemma6.1} and Lemma \ref{lemma7.2}.

\begin{equation}
\begin{split}
   |\phi_{f}(x[0,N-1], \omega)| \leq & 
     \sum \limits_{j=0}^m \left(|\phi_f(\beta^j(u_j)| +  |\phi_f(\beta^j(v_j)|  \right)  \\ \notag
    & + \|f \|_L \sum \limits_{j=0}^{m-1} H(\beta^{j}(u_j), \beta^{j+1}(u_{j+1}))   \\ \notag
    & + \|f \|_L  \sum \limits_{j=0}^{m-1} H(\beta^{m-j}(v_{m-j}) , \beta^{m-j-1}(v_{m-j-1})) \\ \notag
    & +  \|f \|_L H(\beta^m(u_m), \beta^m(v_m))  \notag 
    \end{split}
\end{equation}

If the rank of the cylindrical function $f$ is $n$ then the second and the third sums satisfy

\begin{equation}
     \| f\|_L \sum \limits_{j=0}^{m-1} |H \left( \beta^{j}(u_j), \beta^{j+1}(u_{j+1}) \right)| \leq Kmn \| f\|_L 
\label{eq7.1}
\end{equation}
for $K>0$.   We now only need to find a bound for the quantity

\begin{center}
   $\sum \limits_{j=0}^m \left(|\phi_f(\beta^j(u_j), \omega)| + |\phi_f(\beta^j(v_j), \omega)|  \right)$.
\end{center}

Each word of length $n$ is contained in exactly one cylinder of the corresponding length.  It is also true that the $H(v,w)$ collects at most $n$-many shifts of the concatenation of $v$ and $w$.  This implies that, in particular, we only collect at most $n$-many non-zero terms of the form $r_k 1_{[k,n]}(T^q(vw))$ from the $H(v,w)$.  Lemma \ref{lemma6.1} then implies that

\begin{equation} 
 |\phi_f(vw,\omega)| \leq |\phi_f(v, \omega)| + |\phi_f(w,\omega)|+ \| f\|_{\infty}n.
\end{equation}

Now set $L = \max \limits_{b \in A} |\beta(b)|$.  The above inequality and Lemma \ref{lemma7.2} implies 
\begin{equation}
 |\phi_f(\beta^j(u_j)| \leq L \left(\max \limits_{b \in  A} |\phi_{f}(\beta^j(b), \omega)| \right)+ n L \| f\|_{\infty} .
\end{equation}

Here we used the fact that $u_j$ and $v_j$ are prefixes and suffixes of the words $\beta(b)$ for $b \in A$.  Since we are considering a primitive substitution, the number of cylinders of length $n$, called $J_n$, is bounded above by  $C'n$ for some $C'>0$ dependent only on the substitution \cite{queffelec2010substitution}.   If we set

\begin{center}
    $\phi_f(v,\omega)= \sum_k r_k \phi_{[k,n]}(v,\omega)$.
\end{center}

then each $|r_k| \leq \| f\|_L$.  Thus for $j$ such that $|\beta^j(i)| \geq n$ we have

\begin{equation}
\begin{split}
 |\phi_{f}(\beta^j(b), \omega)| =  
 |\sum \limits_{k=1}^{J_n} r_k \phi_{[k,n]}(\beta^j(b),\omega)| \\ \label{eq7.4}
 &  \leq C'n \| f\|_L \max \limits_{k \leq J_n, i \in A} |\phi_{[k,n]}(\beta^j(i),\omega)|  \notag
 \end{split}
\end{equation}

We will now use a lemma that is analogous to what is found in \cite{bufetov2014modulus} for rank $n$ cylindrical functions.  \\

\begin{lemma}
  Let $\beta$ be a primitive substitution on $A$, and let $\theta$ be the Perron-Frobenius eigenvalue of the substitution matrix $S$.  Take a cylindrical function $f=\sum r_k 1_{[k,n]}$ and a number $\omega \in [0,1)$, and suppose there exists a sequence $\{F_{\omega}(n)\}_{n \geq 0}$ satisfying 
\begin{center}
    $\frac{F_{\omega}(n)}{\theta'} \leq F_{\omega}(n+1) \leq F_{\omega}(n)$, \quad $n \geq 0$
\end{center}
with $1 < \theta' < \theta$, such that 
\begin{center}
    $|\phi_f(\beta^m(b), \omega)| \leq n|\beta^m(b)| F_{\omega}(m)+mn$.
\end{center}

Then

\begin{center}
    $|\phi_f(x[0,N-1], \omega)| \leq \|f \|_L \left( \frac{C_1}{\theta-\theta'}nN F_{\omega}(\lfloor \log_{\theta}N-C_2 \rfloor)+6m^2n+1 \right)$
\end{center}
\end{lemma}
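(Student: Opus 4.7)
The plan is to apply the prefix-suffix decomposition of Lemma 7.2 to $x[0,N-1]$ and reduce the estimate of $\phi_f(x[0,N-1],\omega)$ to bounds on the simpler quantities $\phi_f(\beta^j(u_j),\omega)$ and $\phi_f(\beta^j(v_j),\omega)$, plus overlap contributions. Iterating Lemma 6.1 along the decomposition and invoking equation (7.1) for the overlap $H$-terms yields
\begin{equation*}
|\phi_f(x[0,N-1],\omega)| \leq \sum_{j=0}^m\bigl(|\phi_f(\beta^j(u_j),\omega)| + |\phi_f(\beta^j(v_j),\omega)|\bigr) + K_1 m n \|f\|_L
\end{equation*}
for a substitution-dependent constant $K_1$.

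For the individual terms, since each $u_j$ is a proper prefix of some $\beta(b)$, the word $\beta^j(u_j)$ decomposes as a concatenation of at most $L := \max_{b\in A}|\beta(b)|$ blocks of the form $\beta^j(b)$. Iterating the rank-$n$ concatenation inequality $|\phi_f(vw,\omega)| \leq |\phi_f(v,\omega)| + |\phi_f(w,\omega)| + n\|f\|_\infty$ already established in the section and applying the hypothesis then gives
\begin{equation*}
|\phi_f(\beta^j(u_j),\omega)| \leq L\bigl(n\max_b|\beta^j(b)| F_\omega(j) + jn\bigr) + nL\|f\|_L,
\end{equation*}
and the analogous bound for $v_j$. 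The key step is the geometric summation. The left hypothesis $F_\omega(n)/\theta' \leq F_\omega(n+1)$ iterates to $F_\omega(j) \leq (\theta')^{m-j} F_\omega(m)$, and combined with $|\beta^j(b)| \leq c\theta^j$ one obtains
\begin{equation*}
\sum_{j=0}^m \theta^j F_\omega(j) \leq F_\omega(m)\sum_{j=0}^m \theta^j(\theta')^{m-j} \leq \frac{C\theta^{m+1}}{\theta-\theta'}F_\omega(m),
\end{equation*}
a standard geometric-series estimate with ratio $\theta/\theta' > 1$. The Perron-Frobenius bound $c\theta^m \leq N \leq 2c'\theta^{m+1}$ of the preceding lemma then converts $\theta^{m+1}$ into $O(N)$ and identifies $m$ with $\lfloor \log_\theta N - C_2 \rfloor$ for a suitable $C_2$, producing the main term $\frac{C_1}{\theta-\theta'}nN F_\omega(\lfloor \log_\theta N - C_2\rfloor)$. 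Summing the residual $Ljn$ over $j$ contributes $O(m^2 n)$, while the pieces $\sum_j nL\|f\|_L = O(mn)$ and the overlap term $K_1 mn\|f\|_L$ are absorbed into the same $O(m^2n)$ remainder, yielding the $6m^2 n + 1$ expression of the claim after factoring out $\|f\|_L$.

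The principal technical obstacle is the careful bookkeeping of the geometric summation, and in particular the joint use of the two-sided growth condition on $F_\omega$: the lower bound $F_\omega(n)/\theta' \leq F_\omega(n+1)$ is exactly what forces the ratio $\theta/\theta' > 1$ to dominate the series without losing additional $F_\omega$ factors, while the monotonicity $F_\omega(n+1) \leq F_\omega(n)$ is what permits evaluating the final bound at the single largest index. Matching the floor $\lfloor \log_\theta N - C_2 \rfloor$ precisely also requires tracing through both sides of the Perron-Frobenius estimate on $\min_b|\beta^m(b)|$ and $\max_b|\beta^{m+1}(b)|$ with the right choice of $C_2$, which is where most of the constant-hunting will live.
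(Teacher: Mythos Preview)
Your proposal is correct and follows essentially the same approach as the paper: prefix--suffix decomposition via Lemma~7.2, reduction to $\phi_f(\beta^j(b),\omega)$ through the concatenation inequality, geometric summation using $F_\omega(j)\le(\theta')^{m-j}F_\omega(m)$, and conversion of $\theta^m$ to $N$ through the Perron--Frobenius bounds. If anything, your bookkeeping of the lower-order terms (the $\sum_j jn$ contribution producing $O(m^2 n)$, and the explicit role of each side of the two-sided condition on $F_\omega$) is more careful than the paper's own display, which compresses these into a single line.
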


\begin{proof}
 The estimates (\ref{eq7.1}) - (\ref{eq7.4}) imply the first line in the inequality below, and the second line follows from the hypothesis in the lemma.  

\begin{equation}
\begin{split}
    |\phi_f(x[0,N-1], \omega)| & \leq n \| f\|_{L}\left[ K \sum \limits_{j=0}^{l}\max_{b \in A}|\beta^j(b)|F_{\omega}(j) +(2l+1) \right] \\
    & \leq nK \| f\|_L \left[ \sum \limits_{j=0}^{l} \theta^j (\theta')^{l-j}F_{\omega}(l) \right] \\
    & < K' \| f\|_L \left(  n   \theta^l F_{\omega}(l) + 6m^2n \right)
    \end{split}
\end{equation}
Since $c \theta^l \leq N \leq c' \theta^{l+1}$ the result follows.  
\end{proof}

We can now complete the proof for Theorem \ref{theorem7.1}. 

\begin{proof}
 If we consider both equation (\ref{eq7.4}) and Corollary \ref{corollary6.1} then 

\begin{equation}
    |\phi_f(\beta^j(b),\omega)|  \leq k'n |\beta^j(b)| \prod \limits_{k=n+1}^{m} (1-c' \| \omega  \|^2)+mn.
\end{equation}

Set $F_{\omega}(m)=\prod \limits_{k=n+1}^{m} (1-c' \| \omega  \|^2)$ and let  $c' < \frac{\theta-1}{\theta+1}$ (we can make $c'$ as small as needed).  If $\theta'=\frac{1+\theta}{2}$ then

\begin{equation}
    |\sum \limits_{j=0}^m \left(|\phi_f(\beta^j(u_j), \omega)| + |\phi_f(\beta^j(v_j), \omega)|  \right)| \leq 
     C_S' n \| f\|_L \left( N^{1-c' \| \omega\|^2} + 6m^2+1 \right)   
\end{equation}
The estimate on $m$ in the hypothesis completes the proof of Theorem \ref{theorem7.1}.  
 \end{proof}

\section{Speed of Ergodicity for Cylindrical Functions}
In order to estimate the quantity $I_{\epsilon}^{-}$ from equation (\ref{5.1}) we need to determine the behavior of the spectral measure $\sigma_f$ near $0$.  To accomplish this we are going to bound the Birkhoff sum of the cylindrical function $f$ and use the fact that bounds for this quantity imply bounds for the measure of small sets.   The main result of this section is the following theorem.  

\begin{theorem}
Let $f=\sum \limits_{k=1}^{J_n} r_k 1_{[k,n]}$ be a rank $n$ cylindrical function with 
\begin{center}
$0=\int_X f d \mu = \sum \limits_{k=1}^{J_n} r_k \mu([k,n])$ 
\end{center}
and let $\| f\|_{\infty}$ be the maximum of the absolute value of $f$.  Then the Birkhoff sums of $f$ have the following bound for some $C''>0$ dependent only on the substitution:

\begin{center}
$| \sum \limits_{j=0}^{N-1}f(T^j(x) | \leq n C''  \| f\|_{\infty}  (\log_{\theta}N)^2$.
\end{center}
\end{theorem}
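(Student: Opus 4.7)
The plan is to reduce the Birkhoff sum to a weighted cylinder-counting sum and then run essentially the same prefix-suffix decomposition as in the proof of Theorem \ref{theorem7.1}, but evaluated at $\omega = 0$, replacing the twisted-sum Veech estimate with a speed-of-ergodicity (discrepancy) bound drawn from \cite{adamczewski2004symbolic}. First I would observe that $\sum_{j=0}^{N-1} f(T^j x)$ differs from $\phi_f(x[0,N-1], 0) = \sum_k r_k N_{[k,n]}(x[0,N-1])$ by at most $n \|f\|_{\infty}$, coming from the final $n-1$ shifts where the cylinder $[k,n]$ extends past the end of the finite prefix.

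Applying Lemma \ref{lemma7.2}, with the resulting $m$ satisfying $m \leq C \log_{\theta} N$, and iterating Lemma \ref{lemma6.1} at $\omega = 0$, I would split $\phi_f(x[0,N-1], 0)$ into contributions $\phi_f(\beta^j(u_j), 0)$ and $\phi_f(\beta^j(v_j), 0)$ for $0 \leq j \leq m$, plus $H$-type concatenation errors. Each $H$-term contributes at most $n \|f\|_{\infty}$ and there are $O(m)$ concatenations, so the total concatenation error is $O(mn \|f\|_{\infty})$. Since each $u_j, v_j$ is a prefix or suffix of some $\beta(b)$ of length bounded by $L = \max_{b \in A} |\beta(b)|$, one further application of Lemma \ref{lemma6.1} reduces the task to bounding $|\phi_f(\beta^j(b), 0)|$ for single letters $b \in A$, at the cost of an additional $O(n \|f\|_{\infty})$ error per level.

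The key step is to prove $|\phi_f(\beta^j(b), 0)| \leq C n j \|f\|_{\infty}$. Since $\omega = 0$ makes the generalized twisted sum a cylinder-counting sum, $\phi_f(\beta^j(b), 0) = \sum_k r_k N_{[k,n]}(\beta^j(b))$, and the mean-zero hypothesis $\sum_k r_k \mu([k,n]) = 0$ lets us subtract the expected count:
\[
\phi_f(\beta^j(b), 0) = \sum_k r_k \bigl( N_{[k,n]}(\beta^j(b)) - \mu([k,n]) \cdot |\beta^j(b)| \bigr).
\]
For the Chacon substitution matrix the eigenvalues of $S$ are $0, 1, 3$ with the second-largest having norm exactly $1$; consequently the discrepancy estimate from \cite{adamczewski2004symbolic} provides the linear-in-$j$ bound
\[
\bigl| N_{[k,n]}(\beta^j(b)) - \mu([k,n]) \cdot |\beta^j(b)| \bigr| \leq C_1 \, j,
\]
uniformly over all $b \in A$ and all cylinders $[k, n]$. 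Combined with $J_n \leq C_2 n$ and $|r_k| \leq \|f\|_{\infty}$, summing over $k$ yields $|\phi_f(\beta^j(b), 0)| \leq C_3 n j \|f\|_{\infty}$.

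Summing over $j = 0, \ldots, m$ gives $\sum_{j=0}^{m} L C_3 n j \|f\|_{\infty} = O(n m^2 \|f\|_{\infty})$, and together with the lower-order $O(mn \|f\|_{\infty})$ terms and $m \leq C \log_{\theta} N$, the claimed bound $n C'' \|f\|_{\infty} (\log_{\theta} N)^2$ follows. The main technical obstacle is verifying that the discrepancy estimate is truly uniform in the cylinder rank $n$: the initial cylinder-count vectors and the per-step error terms in the matrix recursion both scale linearly in $n$, and one must check that the projection onto the $\lambda = 1$ eigenspace of $S^t$ does not introduce additional hidden $n$-dependence. Matching the precise form of the discrepancy result in \cite{adamczewski2004symbolic} to the present cylinder-weighted setting is the heart of the argument; once this is in hand, the combinatorial skeleton parallels that of Theorem \ref{theorem7.1}, simply evaluated at $\omega = 0$.
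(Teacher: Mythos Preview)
Your approach is correct in outline but takes a substantially longer route than the paper. The paper's proof is a direct one-step application of the Adamczewski discrepancy bound (Theorem~\ref{speedergo}): since
\[
D_N(X)=\sup_{x\in X}\sup_{w\in A^n(X)}\Bigl|\sum_{j=0}^{N-1}\bigl(1_{[w]}(T^jx)-N\mu([w])\bigr)\Bigr|\le C(\log_\theta N)^2
\]
holds uniformly in the rank $n$, one simply uses the zero-mean hypothesis to write $\sum_{j=0}^{N-1}f(T^jx)=\sum_k r_k\bigl(\sum_{j=0}^{N-1}1_{[k,n]}(T^jx)-N\mu([k,n])\bigr)$, bounds each summand by $D_N(X)$, and uses $J_n\le C'n$. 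No prefix--suffix decomposition, no $H$-terms, no level-by-level summation is needed.

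What you propose is essentially to re-derive Adamczewski's $(\log_\theta N)^2$ bound by hand: decompose $x[0,N-1]$ via Lemma~\ref{lemma7.2}, invoke a block-level discrepancy estimate $|N_{[k,n]}(\beta^j(b))-\mu([k,n])|\beta^j(b)||\le C_1 j$ for the ``perfect'' blocks $\beta^j(b)$, and then sum over $j=0,\dots,m$ to recover $m^2\asymp(\log_\theta N)^2$. This is valid---for the Chacon substitution the eigenvalues $0,1,3$ are simple, so the subdominant contribution to block discrepancies is indeed at most linear (in fact bounded) in $j$---but it duplicates the content of the cited reference. Your route has the virtue of being self-contained and of making the dependence on the spectral gap of $S$ explicit; the paper's route is shorter and treats Adamczewski as a black box. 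Note also that if you quote Adamczewski only in the weak form $D_N\le C(\log_\theta N)^2$ and apply it to each $\beta^j(b)$, you get $j^2$ per level and hence $(\log_\theta N)^3$ overall, losing a logarithm; you really do need the sharper linear-in-$j$ block estimate for your summation to close, so be explicit about which statement from \cite{adamczewski2004symbolic} you are invoking.
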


\begin{proof}
We will need the following result, which is theorem 3 from \cite{adamczewski2004symbolic}.    

\begin{theorem}  Let $A^n(X)$ denote the allowable words of length $n$ in the substitution dynamical system $(X,T)$.  Then we have
\begin{center}
   $D_N(X)=\sup \limits_{x \in X} \sup \limits_{w \in A^n(X)}| \sum \limits_{k=0}^{N-1} \left( 1_{[w]}(T^k(x)-N \mu([w]) \right)| \leq C [\log_{\theta}N]^2$
   \end{center}
\label{speedergo}
\end{theorem}
uniformly in $n$ where $C$ depends only on the substitution.  \\
\qedsymbol
 \\
\-\
Recall that the number of words of length $n$ in $X$, denoted by $J_n$, satisfies $J_n \leq C'n$.  Since $f$ has zero average the lemma implies that 

\begin{equation*}
\begin{split}
    |\sum \limits_{j=0}^{N-1}f(T^jx)| 
   & =|\sum \limits_{j=0}^{N-1} \left(\sum_{k}r_k 1_{[k,n]}(T^jx)-Nr_k \mu([k,n]) \right)| \\
   & \leq J_n \| f\|_{\infty} \max \limits_{1 \leq k \leq J_n}|\sum_{j=0}^{N-1}1_{[k,n]}T^j(x)-N \mu([k,n])| \\
   & \leq C'n \| f\|_{\infty}D_N(X) \\
   & \leq C''n \| f\|_{\infty} [\log_{\theta}N]^2 
\end{split}
\end{equation*}

This implies that the speed of ergodicity for cylindrical functions with average zero of rank $n$ satisfies

\begin{center}
   $|\sum \limits_{k=0}^{N-1}f(T^kx)| \leq  n C'' \| f\|_{\infty}\left( \log_{\theta}N\right)^2$
\end{center}

where $C''$ is independent of $N$ and $x$ and depends only on the substitution.  

\end{proof}

\section{Speed of Weak Mixing for Particular Cylindrical Functions}
Some of the analytic techniques from this section are borrowed from \cite{avila2021quantitative}.  The framework has been slightly adapted for the case of cylindrical functions of arbitrarily large rank.  
\begin{theorem} If $f$ is a cylindrical function of rank $n = \lfloor (\log_{\theta}N)^{\frac{1}{6}} \rfloor$ that has zero average and if $g \in L^2(X)$, then
\begin{center}
    $\frac{1}{N}\sum \limits_{k=0}^{N-1} |\langle U^k(f), g \rangle |^2 \leq K_S \|f \|_L \|f \|_2 \| g\|_2^{2}[\log_{\theta} N  ]^{-\frac{1}{6}}$
\end{center}
\label{theorem9.1}
\end{theorem}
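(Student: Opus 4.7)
The plan is to execute the spectral strategy from Section 5. Writing $a_k = \langle U^k f, g\rangle$, $\ell^2$-duality gives
\[
\sum_{k=0}^{N-1}|a_k|^2 = \sup_{\|c\|_{\ell^2}=1}\Bigl|\sum_{k=0}^{N-1} c_k a_k\Bigr|^2 \leq \|g\|_2^2 \sup_c \int_0^1 |P_c(\omega)|^2\, d\sigma_f(\omega),
\]
where $P_c(\omega) = \sum_k c_k e^{2\pi i k \omega}$ is a Lebesgue-$L^2$-normalized trigonometric polynomial of degree $<N$. The factor $\|g\|_2^2$ appears automatically, so the task reduces to bounding $\sup_c \int |P_c|^2\, d\sigma_f$, which by a Fejer-kernel argument is controlled by $N \sup_{\omega_0} \sigma_f(B(\omega_0, 1/N))$.

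To bound $\sigma_f(B(\omega_0, 1/N))$ I would use a Bufetov--Solomyak-type inequality connecting spectral mass on scale $1/N$ to twisted Birkhoff sums,
\[
\sigma_f\bigl(B(\omega_0, 1/N)\bigr) \lesssim \frac{1}{N^2}\sup_{x\in X}|S_N^x(f,\omega_0)|^2,
\]
and split the supremum over $\omega_0$ into the regions $|\omega_0|<\epsilon$ and $\epsilon\leq|\omega_0|\leq 1-\epsilon$, mimicking the decomposition $I_\epsilon^- + I_\epsilon^+$ in Section 5. On the outer region Theorem 7.1 applied with the prescribed rank $n=\lfloor(\log_\theta N)^{1/6}\rfloor$ and $m\leq C\log_\theta N$ gives $|S_N^x(f,\omega)| \leq C n \|f\|_L (N^{1-c'\|\omega\|^2}+(\log_\theta N)^2)$. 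On the inner region I would combine the speed-of-ergodicity bound Theorem 8.1 with Abel summation to obtain $|S_N^x(f,\omega)| \lesssim (1+N|\omega|)\, n\|f\|_\infty(\log_\theta N)^2$.

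A crucial step is interpolation with the total-mass identity $\sigma_f(\mathbb{R}/\mathbb{Z}) = \|f\|_2^2$: combining this trivial bound with the twisted-sum bound via a geometric mean replaces one factor of $\|f\|_L$ (or $\|f\|_\infty$) by $\|f\|_2$, producing the $\|f\|_L\|f\|_2$ product required by the theorem statement. After this manipulation, the outer and inner contributions, once divided by $N$, take the form
\[
n \|f\|_L \|f\|_2 \cdot N^{-c'\epsilon^2} \qquad \text{and}\qquad n \|f\|_\infty \|f\|_2 \cdot \epsilon\,(\log_\theta N)^2.
\]
I would then pick $\epsilon = c_1(\log_\theta N)^{-\alpha}$ with a suitable $\alpha\in(0,1/2)$: the outer term becomes $\exp(-c'(\log_\theta N)^{1-2\alpha})$, faster than any negative power of $\log N$, while the inner contribution is a pure power of $\log N$. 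With the rank $n=(\log_\theta N)^{1/6}$ prescribed by the theorem, balancing these two contributions produces precisely the rate $[\log_\theta N]^{-1/6}$ multiplied by $\|f\|_L\|f\|_2\|g\|_2^2$.

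The main obstacle I anticipate is controlling the inner region. The Abel-summation promotion of Theorem 8.1 to a twisted-sum bound is sharp only for $|\omega|$ near $1/N$, so handling the intermediate range between $1/N$ and the cutoff $\epsilon$ --- and doing so compatibly with the geometric-mean interpolation responsible for the $\|f\|_L\|f\|_2$ factor --- requires the careful joint optimization of $\epsilon$ and $n$. This joint optimization is precisely what rigidly fixes the exponent $1/6$ in both the rank and the final decay rate, and explains why the theorem is stated with that particular choice of $n$.
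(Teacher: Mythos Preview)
Your overall plan---spectral representation, outer region via the twisted-sum bound of Theorem~7.1, inner region via speed of ergodicity---matches the paper's strategy, but the inner-region step as written does not close, and the claimed balancing is numerically wrong.

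From your Abel-summation estimate $|S_N^x(f,\omega)|\lesssim(1+N|\omega|)\,n\|f\|_\infty(\log_\theta N)^2$, the scale-$N$ Bufetov--Solomyak inequality, and your geometric-mean interpolation, the inner contribution after dividing by $N$ is $n\,\epsilon\,(\log_\theta N)^2\,\|f\|_\infty\|f\|_2$. With $n=(\log_\theta N)^{1/6}$ and $\epsilon=(\log_\theta N)^{-\alpha}$ for any $\alpha\in(0,\tfrac12)$ this is at least $(\log_\theta N)^{5/3}\|f\|_\infty\|f\|_2$, which \emph{diverges}; no admissible $\alpha$ produces $(\log_\theta N)^{-1/6}$. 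The culprit is that Abel summation at the fixed length $N$ forces the ergodicity loss $(\log_\theta N)^2$ regardless of how small $\omega$ is, so the inner bound carries a large power of $\log N$ that $\epsilon$ cannot kill while keeping $N^{-c'\epsilon^2}$ small.

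The paper handles the inner region differently in two respects. First, it does not reduce to $\sup_\omega\sigma_f(B(\omega,1/N))$; it works directly with $I=\int_{\mathbb T}\langle S_N(f,\omega),g\rangle\,d\sigma_{f,g}(\omega)$ and splits the \emph{integral} as $I_\epsilon^-+I_\epsilon^+$. On $I_\epsilon^+$ the integrand is bounded by the uniform twisted-sum estimate and the mass by $|\sigma_{f,g}|(\mathbb T)\le\|f\|_2\|g\|_2$; on $I_\epsilon^-$ the integrand is bounded trivially by $N\|f\|_2\|g\|_2$ and one only needs $|\sigma_{f,g}|(B(0,\epsilon))\le\sigma_f(B(0,\epsilon))^{1/2}\|g\|_2$. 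The factor $\|f\|_L\|f\|_2\|g\|_2^2$ thus appears directly, with no geometric-mean trick. Second---and this is the essential point---the paper bounds $\sigma_f(B(0,\epsilon))$ by applying the Bufetov--Solomyak lemma at scale $M=1/(2\epsilon)$ rather than $N$: then $S_M^x(f,0)$ is an \emph{untwisted} Birkhoff sum of length $M$, and Theorem~8.1 gives the loss $(\log_\theta M)^2=(\log_\theta(1/\epsilon))^2$. Since $\epsilon$ will be a negative power of $\log N$, this is only of order $(\log\log N)^2$, not $(\log N)^2$, and the inner term becomes essentially $\epsilon$ times harmless factors. The subsequent optimization in $\epsilon$ then yields $(\log_\theta N)^{-1/6}$.
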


\begin{proof}
Observe that

\begin{equation*}
\begin{split}
    I= \sum \limits_{n=0}^{N-1} |\langle U^n(f), g \rangle |^2 
   & =\sum \limits_{n=0}^{N-1} \langle U^n(f), g \rangle \int_{R /Z} e^{2 \pi i n \omega } d \sigma_{f,g}( \omega) \\
   & = \int_{R / Z} \langle \sum \limits_{n=0}^{N-1} e^{2 \pi i n \omega} U^n(f), f \rangle d \sigma_{f,g}(\omega) \\
   & =I_{\epsilon}^{-}+I_{\epsilon}^{+}
   \end{split}
\end{equation*}

where 

\begin{center}
$I_{\epsilon}^{-}= \int \limits_{- \epsilon}^{\epsilon} \langle \sum \limits_{n=0}^{N-1} e^{2 \pi i n \omega} U^n(f), g \rangle d \sigma_{f,g}(\omega)$
\end{center}

and

\begin{center}
    $I_{\epsilon}^{+}= \int \limits_{ \epsilon}^{1-\epsilon} \langle \sum \limits_{n=0}^{N-1} e^{2 \pi i n \omega} U^n(f), g\rangle d \sigma_{f,g}(\omega)$.
\end{center}

We are now going to use bounds on the twisted sums to control $I_{\epsilon}^{+}$ and speed of ergodicity estimates for $I_{\epsilon}^{-}$.  The upper bound from Theorem \ref{theorem7.1} implies 

\begin{equation}
      |S_N^x(f, \omega)|   \leq   C_S' \cdot (\log_{\theta}N)^{\frac{1}{6}} \cdot \| f\|_L \left( N^{1-c' \epsilon^2} + (\log_{\theta}N)^2  \right)  .
\label{twistedbnd}     
\end{equation}

This along with the inequalities

\begin{center}
    $|\sigma_{f,g}|(B)| \leq \sqrt{ \sigma_{f,f}(B)}  \sqrt{\sigma_{g,g}(B)} \leq \| f\|_2 \| g\|_2$
\end{center}

for measureable $B$ implies that

\begin{align*}
    |I_{\epsilon}^{+}| \leq \|S_N^x(f, \omega)\|_{\infty} \| g\|_2^{2} \| f\|_2 \\
   & \leq C_s' (\log_{\theta}N)^{\frac{1}{6}} \left( N^{1-c' \epsilon^2}+(\log_{\theta}N)^2\right)\| f\|_L \| f\|_2 \| g\|_2^2
\end{align*}
We will now bound $I_{\epsilon}^{-}$.  We  need the following lemma from \cite{bufetov2014modulus} to relate the size of the measure of small $\epsilon$-balls with bounds for the twisted sums.  

\begin{lemma}
If $ G_N(f, \omega)= N^{-1} \int_X |S_N^x(f,\omega)|^2 d \mu(x)$ and $r=\lfloor (2N)^{-1} \rfloor$ then

\begin{center}
    $\sigma_{f}(B(\omega,r)) \leq \frac{ \pi^2}{4N}G_N(f, \omega) $
\end{center}

 \qedsymbol
\end{lemma}

In the following estimate we set $\omega = 0$ and use the above lemma along with equation (\ref{twistedbnd}) to get
\begin{center}
    $\sigma_{f}(B(0,\epsilon)) \leq K_1 \| f\|_L^2 (2 \epsilon)^2 [  \log_{\theta}(\frac{1}{2 \epsilon}) ]^{4+\frac{1}{3}}  $.
\end{center}

It follows that

\begin{align*}
I_{\epsilon}^{-} \leq \sigma_{f,g}(B(0, \epsilon)) N \| g\|_2 \|f \|_2\ \\
& \leq K_2(2 \epsilon)  [  \log_{\theta}(\frac{1}{2 \epsilon}) ]^{\frac{13}{6}} N  \|f\|_L\|f \|_2 \| g\|_2^{2} .
\end{align*}

To relate the upper bounds on $I_{\epsilon}^{+}$ and $I_{\epsilon}^{-}$, we will now write the quantity $N(2 \epsilon) [\log_{\theta}(\frac{1}{2 \epsilon}) ]^{\frac{13}{6}}$ as a power of $N$ and compare it with the quantity $N^{1-c \epsilon^2}$ on the right hand side of $I_{\epsilon}^{+}$. \\

If we let $u= \frac{1}{2 \epsilon}$ and  transform $u$ by the the increasing unbounded function

\begin{center}
    $u \to \frac{4u^2}{c'} \left( \ln (u) - \frac{13}{6} \ln \log_{\theta}u  \right)$
\end{center}

then for sufficiently large $N$ our choice of the new $u$ is equal to $\ln N$.  The powers are then equal and 

\begin{center}
    $I \leq C_s \|g \|_2^2 \| f\|_L \| f\|_2 \left( N^{1-c' \epsilon^2}+  (\log_{\theta}N)^{\frac{1}{6}} N^{1-c' \epsilon^2}+  (\log_{\theta}N)^{\frac{13}{6}} \right)$.
\end{center}

Since 

\begin{center}
    $ \frac{4}{c'} u^2 \ln u < \ln N < u^3$,
\end{center}

for large enough $u$ we have 

\begin{center}
    $N^{-\frac{c'}{4u^2}}< u^{-1} < [\ln N  ]^{-\frac{1}{3}}$.
\end{center}

Thus 

\begin{center}
    $I \leq K''\|f \|_2 \| g\|_2^{2} \|f \|_L \left(  [\ln N  ]^{-\frac{1}{3}} [ \log_{\theta}N]^{\frac{1}{6}}N+\frac{N}{(\log_{\theta}N)^{\frac{1}{3}}}\right) \leq K''' \frac{N}{(\log_{\theta}N)^{\frac{1}{3}}} \|f \|_2 \| g\|_2^{2} \|f \|_L $.
\end{center}

We have therefore proved that

\begin{center}
    $\frac{1}{N}\sum \limits_{k=0}^{N-1} |\langle U^k(f), g \rangle |^2 \leq K_S \|f \|_L \|f \|_2 \| g\|_2^2 [\log_{\theta} N  ]^{-\frac{1}{6}}$
\end{center}

where $K_S>0$ is a constant that depends only on the substitution.  This completes the proof of Theorem \ref{theorem9.1}.  

\end{proof}

\section{Speed of Weak Mixing on the Substitution Subshift}

A bounded function $f : X \to \mathbb{R}$ is weakly Lipschitz if there is a constant $C>0$ such that for any cylinder $[i,n]$, if $x, y \in [i,n]$ we have 
\begin{center}
    $|f(x)-f(y)| \leq C \mu([i,n])$.
\end{center}

Define $\| f\|_L=C_f+\| f\|_{\infty}$, where $C_f$ is the infimum of the $C$ as in the definition for weakly Lipschitz.   
We will find the speed of weak mixing for weakly Lipschitz functions by approximating their Birkhoff sums with rank $n$ cylindrical functions. 

\begin{lemma} If $f$ is weakly Lipschitz with zero average then there exists  cylindrical function $g_n$ of rank $n$ such that
\begin{center}
    $|f(y)-g_n(y)| = |f(y)-r_i|  \leq  C n^{-1} \|f \|_{L} $,
\end{center}
where $C > 0$ depends only on the substitution.  
\label{lemma10.1}
\end{lemma}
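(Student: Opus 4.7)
The natural choice for $g_n$ is the conditional expectation of $f$ with respect to the partition of $X$ by cylinders of rank $n$. Explicitly, set
\[
r_i = \frac{1}{\mu([i,n])} \int_{[i,n]} f \, d\mu, \qquad g_n = \sum_{i=1}^{J_n} r_i \, 1_{[i,n]}.
\]
By construction $g_n$ is a cylindrical function of rank $n$, and $\int_X g_n \, d\mu = \sum_i r_i \mu([i,n]) = \int_X f \, d\mu = 0$, so $g_n$ also has zero average; this will be convenient when feeding $g_n$ into Theorem \ref{theorem9.1} in the following sections.

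The pointwise approximation is then immediate from the weakly Lipschitz hypothesis. For $y \in [i,n]$, writing $r_i$ as the average of $f$ over $[i,n]$ and using $|f(y) - f(x)| \leq C_f \mu([i,n])$ for all $x \in [i,n]$, one obtains
\[
|f(y) - r_i| = \left| \frac{1}{\mu([i,n])} \int_{[i,n]} \bigl( f(y) - f(x) \bigr) \, d\mu(x) \right| \leq C_f \, \mu([i,n]).
\]
Thus the lemma reduces entirely to the uniform measure estimate $\mu([i,n]) \leq C n^{-1}$ for every rank $n$ cylinder of $X$, with $C$ depending only on $\beta$.

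This measure bound is the substantive step, and is the only place where the structure of the Chacon substitution really enters. The plan is to deduce it from the fact that primitive substitution subshifts are \emph{linearly recurrent}: there is a constant $L$, depending only on $\beta$, such that every factor of $X$ of length $Ln$ contains every admissible word of length $n$ as a subword. Linear recurrence is a standard consequence of primitivity together with Perron--Frobenius, since the frequency of every admissible $n$-word in $\beta^m(a)$ is bounded below uniformly in $a$ and the starting position, and by unique ergodicity this upgrades to $\mu([i,n]) \leq C/n$: indeed, the gap between two consecutive occurrences of the pattern $[i,n]$ along any orbit is at most $Ln$, so the ergodic averages of $1_{[i,n]}$ are bounded above by $1/(Ln)$ in the limit.

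The main obstacle is precisely verifying this linear recurrence bound with a constant independent of $n$. One clean option is to quote the Durand--Host--Skau theorem that every primitive substitution is linearly recurrent; alternatively, for the Chacon substitution specifically, one can give a direct argument using the explicit cutting-and-stacking tower description from Section 3, since each level of the tower of height $h_n$ has measure $\approx 1/h_n$ and the words of length $n$ correspond to slabs cut from these towers once $h_n \gtrsim n$. Either way, combining the measure bound with the weakly Lipschitz inequality gives $|f(y) - g_n(y)| \leq C_f \cdot C n^{-1} \leq C' \|f\|_L \, n^{-1}$, which is the claimed estimate.
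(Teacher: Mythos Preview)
Your construction of $g_n$ as the conditional expectation over rank-$n$ cylinders, and the reduction of the lemma to the uniform bound $\mu([i,n]) \le C/n$, is exactly what the paper does.

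However, your argument for that measure bound is backwards. Linear recurrence says that gaps between consecutive occurrences of a length-$n$ factor are at most $Ln$; from this you conclude ``the ergodic averages of $1_{[i,n]}$ are bounded \emph{above} by $1/(Ln)$.'' In fact a gap of at most $Ln$ forces at least one occurrence in every window of length $Ln$, so the ergodic average is bounded \emph{below} by $1/(Ln)$, i.e.\ $\mu([i,n]) \ge 1/(Ln)$. This is the wrong inequality for the lemma. The upper bound $\mu([i,n]) \le C/n$ does hold for linearly recurrent subshifts, but it requires a different argument: one uses that the \emph{return times} to $[i,n]$ are bounded \emph{below} by a constant times $n$ (equivalently, that any two factors of the same length have frequencies within a fixed multiplicative constant of each other, combined with the Morse--Hedlund bound $p(n) \ge n+1$), which is a separate consequence of linear recurrence, not the one you stated.

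Your alternative suggestion --- arguing directly from the Chacon tower structure, where rank-$n$ cylinders sit inside tower levels of width $\approx 1/h_m$ once $h_m \gtrsim n$ --- is in fact what the paper does: it points to the tower-based frequency computation in Theorem~\ref{thm11.1} (there used for the lower bound) and remarks that the same technique yields the upper bound. So the cleaner fix is to drop the linear-recurrence paragraph and flesh out your tower sketch instead.
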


\begin{proof}

Define $g_n=\sum \limits_{k=1}^{J_n}r_k 1_{[k,n]}$ where $r_k= \frac{1}{\mu([k,n])} \int_{[k,n]}f d \mu$.  Since $f$ has zero average so does $g_n$.  Each $y \in X$ is contained in exactly one cylinder $y \in [i,n]$.  Thus, since $f$ is weakly Lipschitz, 

\begin{equation}
    |f(y)-g_n(y)| = |f(y)-r_i|  \leq \|f \|_{L} \mu([k,n]) .
\end{equation}

 There is some $C > 0$ such that $\mu([i,n]) \leq \frac{C}{n}$ uniformly in $n$ and $i$.  A proof for a similar lower bound can be found in Theorem 10.1, and the same technique can be used to establish this upper bound.  

\end{proof}

\begin{theorem}
If $f : X \to \mathbb{R}$ is weakly Lipschitz with zero average, and if $g \in L^2(X)$ then 

\begin{center}
    $\frac{1}{N}\sum \limits_{n=0}^{N-1} |\langle U^n(f), g \rangle |^2 \leq C_S \|f \|_L \| f\|_2\| g\|_2^2 [\log_{\theta} N  ]^{- \frac{1}{6}}$
\end{center}
where $C_S$ is dependent only on the substitution.  
\label{speedwm}
\end{theorem}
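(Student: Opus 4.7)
The plan is to reduce the weakly Lipschitz case to the cylindrical case handled by Theorem \ref{theorem9.1} via the approximation from Lemma \ref{lemma10.1}, with the rank tuned to the same critical scale $n = \lfloor (\log_\theta N)^{1/6} \rfloor$ used there.

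Specifically, let $n = \lfloor (\log_\theta N)^{1/6} \rfloor$ and let $g_n = \sum_{k=1}^{J_n} r_k \mathbf{1}_{[k,n]}$ be the rank-$n$ cylindrical approximation to $f$ from Lemma \ref{lemma10.1}, so $\|f - g_n\|_\infty \leq C n^{-1} \|f\|_L$. Since $g_n$ is the conditional expectation of $f$ onto the $\sigma$-algebra of rank-$n$ cylinders, it inherits zero average, it satisfies $\|g_n\|_2 \leq \|f\|_2$ and $\|g_n\|_\infty \leq \|f\|_\infty \leq \|f\|_L$, and being constant on each cylinder of rank $n$ it has weakly Lipschitz norm $\|g_n\|_L = \|g_n\|_\infty \leq \|f\|_L$. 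Writing $f = g_n + (f-g_n)$ and using $|a+b|^2 \leq 2|a|^2 + 2|b|^2$, we split
\begin{equation*}
\frac{1}{N}\sum_{k=0}^{N-1}|\langle U^k f, g\rangle|^2 \leq \frac{2}{N}\sum_{k=0}^{N-1}|\langle U^k g_n, g\rangle|^2 + \frac{2}{N}\sum_{k=0}^{N-1}|\langle U^k(f - g_n), g\rangle|^2.
\end{equation*}

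The first sum is handled directly by Theorem \ref{theorem9.1} applied to the rank-$n$ cylindrical function $g_n$, yielding a bound of $2K_S \|f\|_L \|f\|_2 \|g\|_2^2 [\log_\theta N]^{-1/6}$. For the second sum, the unitarity of $U$ and Cauchy--Schwarz give $|\langle U^k(f-g_n), g\rangle|^2 \leq \|f - g_n\|_2^2 \|g\|_2^2$, uniformly in $k$. To produce the factor $\|f\|_L \|f\|_2$ rather than $\|f\|_L^2$, I would estimate
\begin{equation*}
\|f - g_n\|_2^2 \leq \|f - g_n\|_\infty \cdot \|f - g_n\|_1 \leq C n^{-1}\|f\|_L \cdot 2\|f\|_2,
\end{equation*}
using $\|g_n\|_1 \leq \|f\|_1 \leq \|f\|_2$ (probability space) and the $L^\infty$ bound from Lemma \ref{lemma10.1}. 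Since the bound is uniform in $k$, averaging gives an error of order $n^{-1}\|f\|_L \|f\|_2 \|g\|_2^2 \asymp (\log_\theta N)^{-1/6}\|f\|_L \|f\|_2 \|g\|_2^2$, matching the order of the main term.

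Combining the two estimates yields the claimed bound with a constant $C_S = 2K_S + 2C$ depending only on the substitution. The only delicate point is the interpolation step that converts the $L^\infty$ approximation error into an $L^2$ error of the form $n^{-1}\|f\|_L \|f\|_2$ — this is what produces the correct asymmetric combination $\|f\|_L \|f\|_2$ in the final statement, matching Theorem \ref{theorem9.1}. Everything else is bookkeeping: verifying that the approximant $g_n$ still satisfies the hypotheses of Theorem \ref{theorem9.1} (zero mean, rank exactly $n = \lfloor (\log_\theta N)^{1/6} \rfloor$, with $\|g_n\|_L$ and $\|g_n\|_2$ controlled by those of $f$) and choosing the rank to coincide with the critical one so that no re-optimization is needed.
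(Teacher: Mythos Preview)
Your proposal is correct and follows essentially the same approach as the paper: approximate $f$ by the rank-$n$ conditional expectation $g_n$ with $n=\lfloor(\log_\theta N)^{1/6}\rfloor$, apply Theorem~\ref{theorem9.1} to $g_n$, and control the remainder via Lemma~\ref{lemma10.1} (your interpolation $\|f-g_n\|_2^2\le\|f-g_n\|_\infty\|f-g_n\|_1$ makes explicit the step the paper leaves tacit in obtaining the factor $\|f\|_L\|f\|_2$). One small correction: the equality $\|g_n\|_L=\|g_n\|_\infty$ is not literally true, since the weakly Lipschitz condition must hold on cylinders of \emph{every} rank $m$, including $m<n$; however the needed bound $\|g_n\|_L\le\|f\|_L$ does hold, because for $x,y$ in any cylinder $[i,m]$ the averages of $f$ over the rank-$n$ sub-cylinders containing $x$ and $y$ differ by at most $C_f\,\mu([i,m])$.
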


\begin{proof}
 Let $f_n$ be a cylindrical function that satisfies the condition from Lemma \ref{lemma10.1}.    Observe:

\begin{align*}
    N^{-1}\sum_{k=0}^{N-1}|\langle U^kf, g \rangle |^2  \\
  &  \leq N^{-1} \sum_{k=0}^{N-1}| \langle U^k(f-f_n), g \rangle|^2 +2| \langle U^k(f-f_n), g \rangle||\langle U^kf_n, g \rangle| +|\langle U^kf_n, g \rangle|^2 \\     
    & \leq 2C_S \frac{C}{n} \|f \|_L \| f\|_2 \|g \|_2^2  +  K_S  \frac{C}{n} \|f \|_L \|f \|_2 \| g\|_2^2+ \|f \|_L \|f \|_2 \| g\|_2^2[\log_{\theta} N  ]^{-\frac{1}{6}}.  
\end{align*}

The first two terms on the right hand side come from Lemma \ref{lemma10.1}, and the last one follows from Theorem \ref{theorem9.1}. 
Recall that in this case $n = \lfloor (\log_{\theta}N)^{\frac{1}{6}} \rfloor$.  Hence 

\begin{equation*}
    \sum \limits_{k=0}^{N-1}|\langle U^kf, g \rangle |^2\leq C_SN \|f \|_L \| f\|_2 \|g \|_2^2 [\log_{\theta}N ]^{-\frac{1}{6}}.
\end{equation*}

\end{proof}

\section{Speed of Weak Mixing for the Chacon map on the Interval}
The map that codes the orbit of the Chacon map into the substitution subshift has an inverse that transforms in a nice enough way that we can transfer the quantitative results from the Chacon substitution to the Chacon map on the interval.  This is the content of Theorem \ref{thm11.1}.

\begin{theorem}  The inverse of the code  $h: X \to I$ for the Chacon map is weakly Lipschitz.  
\label{thm11.1}
\end{theorem}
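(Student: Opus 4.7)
The plan is to show that for every cylinder $[i,n]$ and every $x,y \in [i,n]$, $|h^{-1}(x)-h^{-1}(y)| \le C\mu([i,n])$ with $C$ depending only on the Chacon map -- equivalently, that the diameter of $h^{-1}([i,n])$ inside $[0,1]$ is at most $C\mu([i,n])$. I would work directly from the cutting-and-stacking picture of Section~3: at stage $m$ the unit interval is partitioned, up to a Lebesgue-null set of measure $3^{-(m+1)}$, into $h_m = (3^{m+1}-1)/2$ levels $L_k^m$ of common length $\ell_m = 2 \cdot 3^{-(m+1)}$; the tower word $W_m = \alpha^m(0)$ records which of $I_0,I_1$ each level lies in; and the self-similarity $W_m = W_{m-1}W_{m-1} 1 W_{m-1}$ encodes the three-columns-plus-spacer step. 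A point in $L_k^m$ has coding beginning with $W_m[k,\ldots,k+n-1]$, so with $m = m(n)$ the smallest integer for which $h_m \ge n$, the preimage $h^{-1}([i,n])$ is, up to a Lebesgue-null set and up to spacers added at later stages, the union of those $L_k^m$ for which the length-$n$ window $W_m[k,\ldots,k+n-1]$ equals the defining word of the cylinder.

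The first auxiliary input is a uniform lower bound $\mu([i,n]) \ge c/n$, which is the linear-recurrence property for primitive substitutions and can be extracted from the return-word analysis of Section~6 combined with the unique ergodicity invoked in Section~4. The matching upper bound $\mu([i,n]) \le C/n$ is exactly the estimate used in Lemma~10.1. Given the lower bound, it therefore suffices to prove the diameter bound $\operatorname{diam}\bigl(h^{-1}([i,n])\bigr) \le C'/n$, since then $|h^{-1}(x)-h^{-1}(y)| \le C'/n \le (C'/c)\,\mu([i,n])$.

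The main technical step is to establish this diameter bound, and I would attack it inductively using the decomposition $W_m = W_{m-1}W_{m-1} 1 W_{m-1}$. A coincidence $W_m[k,\ldots,k+n-1] = W_m[k',\ldots,k'+n-1]$ either places $k$ and $k'$ inside a single one of the three copies of $W_{m-1}$ -- in which case $L_k^m$ and $L_{k'}^m$ are corresponding left/middle/right thirds of stage-$(m-1)$ levels whose indices agree in $W_{m-1}$, and the diameter propagates from the previous stage -- or arises from a boundary configuration where the length-$n$ window straddles one of the three column joins. For any fixed defining word, the $n-1$ candidate straddling words at each of the three joins are specific and at most one matches, so boundary contributions are $O(1)$ per stage, and the spatial locations of these exceptional levels form a bounded number of clusters whose widths decay geometrically in $m$. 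Spacers added at stages $m' > m$ enter $h^{-1}([i,n])$ only when the defining word begins with $1$ -- equivalently with $10$, since the Chacon subshift contains no $11$ -- and all such spacers live in the nested family $[1-3^{-m'},1-3^{-(m'+1)}] \subset I_1$, so their combined spatial contribution is a geometric series absorbed in the constant.

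The main obstacle is the spatial clustering claim in the previous paragraph: even though the level count and the measure are controlled, showing that the corresponding levels sit geometrically close in $[0,1]$ rather than spreading across the interval requires careful bookkeeping of the recursive spatial layout of the tower and of how boundary straddlings at successive stages interact. A clean framework for this bookkeeping is the recognizability of the primitive substitution $\alpha$, which gives a unique desubstitution parse for every sufficiently long window of $X$ and lets one propagate a rank-$n$ cylinder to a rank-$O(n/3)$ cylinder at a coarser stage; iterating this propagation reduces the diameter estimate to a geometric sum in $m$, and combining it with the lower bound on $\mu$ closes the argument with $C = C'/c$.
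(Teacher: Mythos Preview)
Your overall architecture matches the paper's: both arguments split into a lower bound $\mu([i,n])\ge c/3^m$ (with $3^{m-1}\le n<3^m$) coming from the frequency of $S_{m+2}$-blocks, and an upper bound on the diameter of $h^{-1}([i,n])$ of order $3^{-m}$. The difference is entirely in how the diameter bound is obtained.

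The paper does not attempt the inductive level-matching or recognizability analysis you outline. Instead it makes a single direct observation: if $h(x)$ and $h(y)$ sit on \emph{different} levels of the stage-$(m-2)$ tower, then by the decomposition $S_{m-1}=S_{m-2}S_{m-2}1S_{m-2}$ one of the two orbits reaches the inserted spacer~$1$ strictly before the other, and this happens within $2h_{m-2}+1=3^{m-1}\le n$ steps; hence the two codings would already differ inside the rank-$n$ window, a contradiction. Therefore $h(x)$ and $h(y)$ lie on the \emph{same} level of the stage-$(m-2)$ tower, which immediately gives $|h(x)-h(y)|\le l_{m-2}=2\cdot 3^{-(m-1)}$. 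No induction, no tracking of boundary windows, no spatial-clustering argument is needed.

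Your route via recognizability is not wrong --- Moss\'e-type recognizability for the primitive Chacon substitution is exactly the synchronization statement you would need, and once you have it the geometric-series bookkeeping you describe can be made to work --- but the step you correctly flag as the main obstacle (showing that the finitely many matching levels are spatially close, not merely few in number) is precisely what the paper's spacer-as-marker observation eliminates in one stroke. In effect, the paper proves the specific recognizability bound it needs by hand using the single spacer in $S_{m-2}S_{m-2}1S_{m-2}$, rather than invoking the general machinery and then having to translate it back into a metric statement on $[0,1]$.
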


\begin{proof}
Let $x, y \in [i,n]$, and let $w$ be the length $n$ word such that $1_{[i,n]}(w)=1$.  Choose $m$ so that $3^{m-1} \leq n < 3^m$.  The fixed point of the substitution is generated by the inductive formula $S_{k+1}=S_k S_k 1 S_k$ with $S_0=0$.  Since the orbit of the fixed point is dense, and since the heights of the towers at the $k$'th step of the iteration is $h_k=\frac{3^{k+1}-1}{2}$, we have that $w$ is contained in the word $S_mS_m1S_m=S_{m+1}$.  We can see from the sequence $S_{m+1}$ that if $h(x)$ and $h(y)$ are at different vertical levels of the tower then the code for one of the strings $x$ or $y$ will hit the letter $1$ in $S_mS_m1S_m$ before the other in not more than twice the height of $S_m$ plus 1 steps in the orbit under the action of the Chacon map.  Since $3^{m-1} \leq n$, the points $h(x)$ and $h(y)$ must be in the same vertical level of the tower created during step $(m-2)$ of the construction.  Otherwise they would have a different code in less than $n$ steps, contradicting the fact that they are in a rank $n$ cylinder.  The length of the vertical levels  in the tower is equal to $l_k=\frac{2}{3^k}$.  By unique ergodicity, the measure of the cylinders of length $n$ is equal to its frequency in the fixed point of the substitution.  Since $n \leq 3^m$, and since the fixed point is dense, we can see that each block $S_{m+2}$ contains at least one instance of the word $w$.  Hence the frequency of $w$ is not less than the frequency of $S_{m+2}$.  We can see that there are $3$ occurrences of $S_{m+2}$ per string of length $h_{m+3}$. Hence the frequency of $S_{m+2}$ is bounded below by $\frac{C}{3^m}$.  Since $n \leq 3^m$, there is a $C'>0$ such that $l_{m-2} \leq C' \mu([i,n])$.  Thus

\begin{center}
    
    $|h(x)-h(y)| \leq l_{m-2} \leq C \mu([i,n])$.  
\end{center}

\end{proof}

\-\\
\begin{theorem}
The speed of weak mixing for the Chacon map on the unit interval is the same as that in Theorem \ref{speedwm}.  
\end{theorem}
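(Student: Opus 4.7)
The plan is to transfer Theorem \ref{speedwm} from the substitution subshift $(X,T)$ to the Chacon map $(I,C)$ by using the measurable conjugacy $h \circ C = T \circ h$ established in Section 3, together with the regularity of $h^{-1}$ provided by Theorem \ref{thm11.1}. Given $f \in Lip([0,1])$ with $\int_I f\,dx = 0$ and $g \in L^2(I)$, I would set $\tilde f := f \circ h^{-1}$ and $\tilde g := g \circ h^{-1}$ on $X$. Since $h$ is a measurable isomorphism taking Lebesgue measure on $I$ to the unique $T$-invariant measure $\mu$ on $X$, we immediately have $\|\tilde f\|_{L^2(X)} = \|f\|_{L^2(I)}$, $\|\tilde g\|_{L^2(X)} = \|g\|_{L^2(I)}$, $\int_X \tilde f\,d\mu = 0$, and the conjugacy gives
\begin{equation*}
\langle U_C^k f, g\rangle_{L^2(I)} = \int_X \tilde f(T^k y)\,\tilde g(y)\,d\mu(y) = \langle U_T^k \tilde f, \tilde g\rangle_{L^2(X)}
\end{equation*}
for every $k \geq 0$, so the two Ces\`aro sums of squared correlations coincide term by term.

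The next step is to verify that $\tilde f$ is weakly Lipschitz on $X$ in the sense of Section 10, with a norm controlled by $\|f\|_L$. For any cylinder $[i,n]$ and any $x, y \in [i,n]$, Theorem \ref{thm11.1} provides a constant $C > 0$ (depending only on the Chacon map) such that $|h^{-1}(x) - h^{-1}(y)| \leq C\,\mu([i,n])$. Combining this with the Lipschitz property of $f$ yields
\begin{equation*}
|\tilde f(x) - \tilde f(y)| \leq \|f\|_L \cdot |h^{-1}(x) - h^{-1}(y)| \leq C \|f\|_L \cdot \mu([i,n]),
\end{equation*}
and the sup-norm of $\tilde f$ is bounded by $\|f\|_\infty \leq \|f\|_L$, so the weakly Lipschitz norm of $\tilde f$ satisfies $\|\tilde f\|_{L,X} \leq C' \|f\|_L$ for some constant $C'$ depending only on the substitution.

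Applying Theorem \ref{speedwm} to $\tilde f$ and $\tilde g$ on $X$, and using that the Perron--Frobenius eigenvalue of the Chacon substitution matrix is $\theta = 3$, yields
\begin{equation*}
\frac{1}{N} \sum_{k=0}^{N-1} |\langle U_C^k f, g\rangle|^2 = \frac{1}{N} \sum_{k=0}^{N-1} |\langle U_T^k \tilde f, \tilde g\rangle|^2 \leq K_C \|f\|_L \|f\|_2 \|g\|_2^2 \, [\log_3 N]^{-1/6},
\end{equation*}
which is exactly the statement of Theorem A. Overall the proof is essentially mechanical once one has both the conjugacy and the weak-Lipschitz regularity of $h^{-1}$; the only real technical content is the verification that $\tilde f$ is weakly Lipschitz with norm controlled by $\|f\|_L$, and this is handled entirely by Theorem \ref{thm11.1}. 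The genuine obstacle was therefore already resolved in the previous section, where the geometry of the cutting-and-stacking construction was used to bound the diameters of cylinders by their measures; here one simply plugs in that result.
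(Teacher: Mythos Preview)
Your argument is correct and follows essentially the same route as the paper: push $f$ and $g$ from $I$ to $X$ via the conjugacy, use Theorem~\ref{thm11.1} to see that the transferred $f$ is weakly Lipschitz with norm controlled by $\|f\|_L$, and then invoke Theorem~\ref{speedwm}. Your write-up is in fact a bit cleaner than the paper's---you keep $g\in L^2$ rather than unnecessarily restricting to Lipschitz $g$, and you make the norm comparison $\|\tilde f\|_{L,X}\le C'\|f\|_L$ explicit.
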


\begin{proof}
  Let $C$ be the Chacon map on the interval and let $\sigma$ be the shift on the Chacon substitution.  The code $h$ satisfies $C \circ h = h \circ \sigma$.  If $\mu$ is the invariant measure on the uniquely ergodic substitution subshift, then the pullback measure $\mu(h^{-1}(A))$ is invariant under the action of the Chacon map.  Since $(I,C,L)$ is uniquely ergodic, $\mu(h^{-1}(A)=L(A)$.  It follows that for Lipschitz functions $f,g \in Lip(I)$,  
\begin{center}
    $\langle f \circ C^k , g \rangle_L=\langle f \circ C^k \circ h , g \circ h \rangle_{\mu}=\langle f \circ h \circ \sigma^k  , g \circ h \rangle_{\mu}$.
\end{center}
Since $f $ and $g$ are Lipschitz with $h$ weakly Lipschitz, $f \circ h$ and $g \circ h$ are both weakly Lipschitz.  If we use the formula

\begin{center}
    $|\sigma_{f,g}|(B) \leq \sqrt{ \sigma_{f,f}(B)}  \sqrt{\sigma_{g,g}(B)}$
\end{center}

along with Theorem \ref{speedwm} 

\begin{center}
    $\frac{1}{N}\sum \limits_{n=0}^{N-1} |\langle U^n(f), g \rangle |^2 \leq K_S \|f \|_L \|f \|_2 \| g\|_2[\log_{\theta} N  ]^{- \frac{1}{6}}$
\end{center}
where $K_S>0$ depends only on the substitution.  

\end{proof}
\section{Lower Bounds}
We will show that our upper bound for the speed of weak mixing is essentially sharp in that it cannot be improved to be better than logarithmic.  
Let $A_k = [0,2 \cdot 3^{-(k+1)}]$ and set $E_k= \{n \in \mathbf{N} : \mu(A_k \cap T^{-n}A_k)=0  \}$. From \cite{park2023size} we have the constraints  $C_k=(2 \cdot (h_k-3)! \cdot (\log(3)^{h_k})^{-1}$ and $t + 4 < h_k$ , where $h_k = (3^{k+1}-1)/2$.  Since $A_k = [0,2 \cdot 3^{-(k+1)}]$  we have $\mu(A_k) \geq \frac{1}{4h_k}$.  Now set $t=h_k-5$ and pick $N$ and $h_k$ so that $\frac{1}{2} \cdot \log(N) > h_k > 
\frac{1}{4} \cdot \log(N)$.  From Theorem 7.2 of \cite{park2023size} we have

\begin{lemma}
$E_k \cap [0,N] \geq C \log(N)^t$, where $C>0$ and $t > 0$.  
\end{lemma}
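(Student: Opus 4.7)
The plan is to invoke Theorem 7.2 of \cite{park2023size} directly, with the parameter choices stipulated in the paragraph preceding the lemma. That theorem is precisely designed to produce a lower bound of the form $|E_k \cap [0,N]| \geq C_k \log(N)^t$ once the compatibility condition $t+4 < h_k$ is verified, so the work is entirely in checking hypotheses and unpacking the resulting estimate.

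First, I would verify the hypothesis. With $t = h_k - 5$, one has $t + 4 = h_k - 1 < h_k$, so the constraint holds automatically. Substituting $A_k = [0, 2 \cdot 3^{-(k+1)}]$ (whose measure is bounded below by $(4h_k)^{-1}$, as already noted) and applying Theorem 7.2 yields
\[
|E_k \cap [0,N]| \;\geq\; C_k \log(N)^{h_k - 5}, \qquad C_k = \bigl(2(h_k - 3)!\,(\log 3)^{h_k}\bigr)^{-1}.
\]

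Second, I would convert this into the stated form $C \log(N)^t$ by exploiting the coupling $\tfrac14 \log N < h_k < \tfrac12 \log N$. Although $C_k$ itself decays super-exponentially in $h_k$, the exponent $h_k - 5$ of $\log N$ also grows like $\log N$, and these rates compete favorably. A Stirling estimate $(h_k - 3)! \sim \exp(h_k \log h_k - h_k)$ combined with $\log h_k \approx \log \log N - \log 2$ gives
\[
C_k \log(N)^{h_k - 5} \;\geq\; \exp\bigl(h_k(\log \log N - \log h_k - \log\log 3 + O(1))\bigr),
\]
and the right hand side already grows as a positive power of $N$ when $h_k \leq \tfrac12 \log N$. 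In particular, it dominates any fixed polylogarithmic function $\log(N)^t$, giving the lemma for absolute $C, t > 0$.

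The main obstacle is purely bookkeeping: tracking the three competing rates inside $C_k \log(N)^{h_k-5}$ (factorial decay, the geometric factor $(\log 3)^{-h_k}$, and super-polynomial growth of $\log(N)^{h_k-5}$) and verifying that the choice $h_k \in (\tfrac14 \log N, \tfrac12 \log N)$ keeps the net expression comfortably above any desired fixed power of $\log N$. Once this arithmetic is carried out—say by taking $h_k$ near the lower end of its admissible range—one obtains the bound for all sufficiently large $N$, with $C$ and $t$ universal. No dynamical input beyond Theorem 7.2 of \cite{park2023size} is required at this step; the lemma is essentially a rephrasing of that theorem specialized to the scaling $h_k \asymp \log N$ that will be needed in the lower bound argument for Theorem B.
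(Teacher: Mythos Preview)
Your proposal is correct, and its first step---directly invoking Theorem~7.2 of \cite{park2023size} with the parameter choices $t=h_k-5$ and $h_k\in(\tfrac14\log N,\tfrac12\log N)$ after checking $t+4<h_k$---is exactly what the paper does; in fact the paper's entire proof of this lemma is the citation, with $C$ and $t$ understood to be the $k$-dependent quantities $C_k$ and $t=h_k-5$ already fixed in the preceding paragraph rather than absolute constants. Your second step, the Stirling bookkeeping that converts $C_k\log(N)^{h_k-5}$ into something growing like a power of $N$, is not needed here: the paper postpones precisely that computation to the next lemma (Lemma~12.2), so you have effectively anticipated and sketched its proof as well.
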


Let $g=1_{A_k}$ be the characteristic function on $A_k$. Now choose $f_{N}$ to be positive, continuously differentiable,  and supported on $A_{N,k}$ with the condition that
\begin{align}
\log(N)|\int_0^1 f_{N}(x) dx| \geq \frac{1}{100} \left(\max_{[0,1]}|f_N'|+\| f_N\|_{\infty} \right).
\end{align}
This is possible since $\log(N)$ is comparable to the measure of $\mu(A_k)^{-1}$ and $f_N$ is supported on  $A_k$.  Note that if $n \in E_k$ then $\int_0^1 f_N(T^nx)g dx = 0$.  This implies the following:

\begin{displaymath}
\begin{aligned}
\sum \limits_{i=0}^{N-1} \left|\int_x f(T^ix) g dx - \int_X f dx \int_X g dx\right| \\
&\geq \sum \limits_{i \in E_k} \left|\int_x f(T^ix) g dx - \int_X f dx \int_X g dx\right| \\
&= \sum \limits_{i \in E_k} \left| \int_X f dx \int_X g dx\right| \\
&\geq C_k \log(N)^t  \| g\|_2 \mu(A_k)^{\frac{1}{2}} \left|\int_X f dx\right|
\end{aligned}
\end{displaymath}
\label{eq: inequality1}

Here $t+4<h_k$ and $C_k=(2 \cdot (h_k-3)! \cdot (\log(3)^{h_k})^{-1}$ and $t + 4 < h_k$ , where $h_k = (3^{k+1}-1)/2$ \cite{park2023size}.  

\begin{lemma}
We can choose $t$ and $k$ so that
\begin{center}
$\cdot C_k \log(N)^t\mu(A_k)^{\frac{1}{2}} \geq C \cdot \frac{N}{\log(N)}$
\end{center}
for some $C>0$ independent of the parameters.
\end{lemma}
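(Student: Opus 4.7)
The plan is to substitute the explicit formulas for $C_k$, $h_k$, $t$, and $\mu(A_k)$ into the target inequality, take logarithms, and then apply Stirling's formula together with the prescribed window $\tfrac14 \log N < h_k < \tfrac12 \log N$ to reduce everything to an elementary estimate. Since $h_k = (3^{k+1}-1)/2$, one has $\mu(A_k) = 2\cdot 3^{-(k+1)} = 2/(2h_k+1)$, so $\mu(A_k)^{1/2}\asymp h_k^{-1/2}$ and all $k$-dependence is moved into $h_k$-dependence. Taking logarithms, the inequality $C_k(\log N)^{t}\mu(A_k)^{1/2}\geq C\,N/\log N$ becomes
\[
\log C_k \;+\; (h_k-5)\log\log N \;-\; \tfrac12 \log h_k \;\geq\; \log N \;-\; \log\log N \;+\; O(1).
\]

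Next I would evaluate $\log C_k$ using Stirling's approximation $\log((h_k-3)!) = h_k\log h_k - h_k + O(\log h_k)$. The factorial is the dominant growing factor in the constant imported from \cite{park2023size}, so $\log C_k = h_k\log h_k + O(h_k)$ up to bounded contributions from the $(\log 3)^{h_k}$ factor. The left-hand side then has dominant contribution $h_k\log h_k + h_k\log\log N + O(h_k)$. The crucial step is to exploit the window on $h_k$: the bounds $\tfrac14 \log N < h_k < \tfrac12 \log N$ force $\log h_k = \log\log N + O(1)$ and $h_k\asymp \log N$, so the left-hand side simplifies to
\[
2\,h_k\log\log N \;+\; O(h_k) \;\asymp\; (\log N)(\log\log N).
\]
This exceeds $\log N$ by a multiplicative factor of $\log\log N$, leaving ample room to absorb the $-\tfrac12 \log h_k$ correction and all $O(\log h_k)$ error terms once $\log\log N$ is larger than an absolute constant.

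The main obstacle I foresee is the delicate balance between these large factors: $\log C_k$, $(h_k-5)\log\log N$, and the target $\log(N/\log N)$ are all on the same \emph{$\log N$}-scale, so the desired inequality succeeds only because the extra $\log\log N$ gain produced by aligning the Stirling main term $h_k\log h_k$ with the factor $(h_k-5)\log\log N$ is precisely enabled by the window on $h_k$. One must also verify that the geometric sequence $h_k=(3^{k+1}-1)/2$ hits the window $(\tfrac14\log N,\tfrac12\log N)$ along a cofinal subsequence of $N$; this is immediate because consecutive $h_k$ differ by a factor of roughly $3$ while the window has ratio $2$, so the choices of $N$ for which a valid $k$ exists form a diverging sequence, which is consistent with the statement of Theorem B. Finally one checks that the choice $t=h_k-5$ satisfies $t+4<h_k$, as required by the Park estimate, and that the $O(1)$ error terms coming from Stirling and from \cite{park2023size} are absorbed for $N$ above an explicit threshold depending only on the Chacon map.
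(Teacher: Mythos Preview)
Your overall strategy --- take logarithms, apply Stirling to the factorial, and exploit the window $\tfrac14\log N<h_k<\tfrac12\log N$ --- is exactly the paper's. However, there is a sign error that invalidates your order-of-magnitude conclusion. The constant from \cite{park2023size} is $C_k=\bigl(2\,(h_k-3)!\,(\log 3)^{h_k}\bigr)^{-1}$, with the factorial in the \emph{denominator}; hence
\[
\log C_k \;=\; -\log\bigl((h_k-3)!\bigr)-h_k\log\log 3 - \log 2 \;=\; -\,h_k\log h_k + O(h_k),
\]
not $+h_k\log h_k+O(h_k)$. Consequently the left-hand side of the logarithmic inequality is
\[
-\,h_k\log h_k \;+\; (h_k-5)\log\log N \;+\; O(h_k),
\]
and since the window forces $\log h_k=\log\log N+O(1)$, the two leading terms \emph{cancel} up to $O(h_k)$ rather than add. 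The paper records this correctly as $O(h_k)+t\log\log N-h_k\log h_k$ and then bounds it below by $C(\log N-\log\log N)$.

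So the slack you claim (``exceeds $\log N$ by a multiplicative factor of $\log\log N$'') is not there; after the cancellation the expression is only of order $\log N$, and the inequality rests on the positivity of the residual $O(h_k)$ constant (coming from $1-\log\log 3>0$ and from $\log h_k\le \log\log N-\log 2$), which is exactly what the paper checks. Once you correct the sign, your argument collapses to the paper's; the remaining remarks you make about the window being hit by the geometric sequence $h_k$ and about $t=h_k-5$ satisfying $t+4<h_k$ are fine and consistent with what the paper uses.
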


\begin{proof}

Sterling's formula gives the estimate $n! \leq en^{n+\frac{1}{2}}e^{-n}$.  Hence

\begin{align*}
    \log \left( C_k \log(N)^t \mu(A_k)^{\frac{1}{2}} \right) \\
    &\geq h_k-(h_k+\frac{1}{2})\log(h_k)-1 
 - h_k \log \log (3)  \\
 &+t \cdot \log \log(N)  - \frac{1}{2}\log(4 \cdot h_k).
\end{align*}

This is equivalent to 

\begin{center}
$O(h_k) + t \cdot \log \log(N) - h_k \log(h_k)$.
\end{center}

Since $\frac{1}{2} \cdot \log(N) > h_k > 
\frac{1}{4} \cdot \log(N)$, our choice of $t$ dictates that that the above quantity is bounded below by  

\begin{align}
C \left(\log(N) - \log \log (N) \right).
\end{align}

for some $C > 0$ independent of $k, N, f$ and $g$.  
\end{proof}

The Lipschitz norm of a function is the sum of its supremum norm and the smallest $C_f$ such that $|f(x)-f(y)| \leq C_f|x-y|$.  Hence Lemma 12.2 and equation (12.2) gives us 

\begin{align*}
\sum \limits_{i=0}^{N-1} \left|\int_X f(T^ix) g(x) dx - \int_X f(x) dx \int_X g(x) dx\right| \\
& \geq C \frac{N}{\log(N)^2} \| f_N\|_L \| g_{N}\| \\
& \geq  C \frac{N}{\log(N)^2} \cdot \| f_N\|_L^{\frac{1}{2}} \cdot \| f_N\|_2^{\frac{1}{2}} \cdot \| g_{N}\|_2
\end{align*}

with $g_{N}=1_{A_{N,k}}$ and $f_N$ chosen as above.

\section{Existence of 
Exceptional Sets}
Recall that a measure preserving transformation $T$ is weakly mixing if and only if for all measurable $A$ and $B$ there is an exceptional set $J_{A,B} \subset \mathbf{N}$ such that $\frac{1}{n} |J_{A,B} \cap [0.n] | \to 0$ and for $n \notin J_{A,B}$,

\begin{center}
$\lim \limits_{n \to \infty} \mu(T^{-n}(A) \cap B) \to \mu(A) \mu(B)$.
\end{center}

The following lemma from \cite{park2023size} gives us a way to bound the density of such an exceptional set $J_{f,g}$ for Lipschitz $f$ and square integrable $g$ in terms of the speed of weak mixing.  

\begin{lemma}
    Let $(a_n)$ be a decreasing sequence of non-negative numbers.  Suppose that 
\begin{center}
$\frac{1}{N} \sum \limits_{j=0}^{N-1} a_j \leq b_N$
\end{center}
for all $N \in \mathbf{N}$ and $b_N \to 0$ as $N \to \infty$.  Let $c_N$ decrease and converge to zero.  Then there is a set $J \subset \mathbf{N}$ such that $\frac{c_N}{N b_N} |[0, N] \cap J | $ converges to zero, and $a_n \to 0$ for $n \notin J$.  
\end{lemma}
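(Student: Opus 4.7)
The plan is to construct $J$ via a Markov-type threshold argument, exploiting the monotonicity of $(a_n)$ and $(c_N)$ together with the Cesàro bound $\sum_{j < N} a_j \leq N b_N$. A useful preliminary observation is that, since $(a_n)$ is decreasing, for any threshold $t > 0$ the super-level set $\{n : a_n > t\}$ is an initial segment $[0, n_t)$, and the Cesàro bound forces $t \cdot n_t \leq n_t b_{n_t}$, i.e.\ $t \leq b_{n_t}$. In particular $a_n \leq b_n$, which already gives the qualitative statement $a_n \to 0$ outside \emph{any} $J$; the real content is thus the quantitative coupling between the size of $J$ and the rates $b_N, c_N$.

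The natural candidate is $J_0 := \{n : a_n > c_n\}$. For $n \notin J_0$ one has $a_n \leq c_n \to 0$, giving the pointwise claim. For the density, every $n \in J_0 \cap [0,N]$ satisfies $a_n > c_n \geq c_N$ because $(c_N)$ is decreasing, so Markov's inequality yields
$$c_N \cdot |J_0 \cap [0, N]| \;\leq\; \sum_{n \in J_0 \cap [0,N]} a_n \;\leq\; \sum_{n=0}^{N-1} a_n \;\leq\; N b_N,$$
hence $|J_0 \cap [0,N]| \leq N b_N / c_N$, which is only the weaker bound $\frac{c_N}{N b_N} |J_0 \cap [0,N]| \leq 1$, one order short of the claimed $o(1)$.

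To upgrade $O(1)$ to $o(1)$, I would refine the construction by passing to a sparse subsequence $N_k \uparrow \infty$ together with a mildly shrinking sequence of bonus factors $\eta_k \downarrow 0$, and set
$$J \;:=\; \bigcup_{k \geq 1} \bigl\{ n \in [N_{k-1}, N_k) : a_n > \eta_k \, c_{N_k} \bigr\}.$$
The pointwise claim persists since $\eta_k c_{N_k} \to 0$. The density follows by applying the Markov estimate block-by-block and then summing, choosing the $N_k$ sparse enough (relative to the values of $b_{N_k}/c_{N_k}$) that the cumulative contribution of blocks $j \leq k$ is dominated by $o(N_k b_{N_k}/c_{N_k})$; monotonicity of $(a_n)$ and $(c_N)$ is then used once more to interpolate the bound from the subsequence $(N_k)$ to all $N$.

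The main obstacle is the last step — calibrating $(N_k)$ and $(\eta_k)$ simultaneously so that the telescoped density bound genuinely becomes $o(N b_N/c_N)$. The difficulty is that the ratio $b_N/c_N$ is given only qualitatively (both tend to zero, with no relative rate), so one must extract a subsequence along which the ratio is suitably tame — a standard diagonal extraction — and then verify that intermediate $N$ between $N_{k-1}$ and $N_k$ do not spoil the estimate. Everything else reduces to the Markov inequality applied on each block.
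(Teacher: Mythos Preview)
The paper does not actually prove this lemma; it is quoted verbatim from \cite{park2023size} and closed with a bare \qedsymbol, so there is no ``paper's own proof'' to compare against.

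That said, your preliminary observation already settles the lemma \emph{as stated} and renders the rest of your write-up unnecessary. Monotonicity of $(a_n)$ gives $(n+1)\,a_n \leq \sum_{j=0}^{n} a_j \leq (n+1)\,b_{n+1}$, hence $a_n \leq b_{n+1} \to 0$ for \emph{every} $n$. Taking $J=\varnothing$ then satisfies both conclusions trivially: $a_n \to 0$ for all $n \notin J$, and $\frac{c_N}{N b_N}\,|[0,N]\cap J| = 0$. You noticed this but then wrote ``the real content is thus the quantitative coupling''; in fact with $J=\varnothing$ there is no remaining content at all.

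This strongly suggests the hypothesis ``$(a_n)$ decreasing'' is a transcription slip in the paper: in the intended application (just below the lemma) one sets $a_j = |\langle U^j f, g\rangle|^2$, which is certainly not monotone. Your Markov-plus-block construction is aimed at precisely this harder, non-monotone version, and the outline is the standard one. But as you yourself flag in the final paragraph, the calibration of $(N_k)$ and $(\eta_k)$ is not carried out; without a concrete choice (e.g.\ threshold $\sqrt{b_n c_n}$, or blocks along which $b_{N_k}/c_{N_k}$ is controlled) the argument remains a sketch. So: for the lemma as written your first paragraph is a complete proof and the rest is superfluous; for the lemma as presumably intended, the proposal is incomplete at exactly the point you identify.
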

\qedsymbol

In the case of the Chacon map we look to Theorem \ref{speedwm}.   If  we set $b_N = C \| f\|_2 \| f\|_L \| g\|_2^2 \log(N)]^{-\frac{1}{6}}$ and let $a_j = |\int_X f(T^jx)g(x) d \mu(x) |^2$ then lemma 12.1 with $c_N = C \| f\|_2 \| f\|_L \| g\|_2^2 [\log_3 (N)]^{-\frac{1}{3}}$ implies the following.  
\begin{theorem}
 Let $f$ be Lipschitz with zero average and $g \in L^2([0,1])$.  There is some $J_{f,g}$ such that $\langle f(T^k), g \rangle \to 0$ for $k \notin J_{f,g}$ and $\frac{1}{N} |J_{f,g} \cap [0,N]| \leq [\log_3(N)]^{-\frac{1}{6}}$
\end{theorem}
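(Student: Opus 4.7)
The plan is to derive this statement by feeding the Ces\`aro average bound from Theorem \ref{speedwm} (extended to the Chacon map on $[0,1]$ in Section 11) into Lemma 13.1 with carefully chosen parameters. The only combinatorial content is a calibration of the comparison sequence so that the conclusions of Lemma 13.1 match the stated density and convergence claims; all of the analytic work has already been done in the preceding sections.

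First I would set $a_j := |\langle f\circ T^j, g\rangle|^2$. By the speed of weak mixing for the Chacon map on the interval (Section 11 applied with these $f$ and $g$),
\[
\frac{1}{N}\sum_{j=0}^{N-1} a_j \;\leq\; b_N, \qquad b_N := C\,\|f\|_L\,\|f\|_2\,\|g\|_2^2 \,[\log_3 N]^{-1/6},
\]
for all $N \geq 2$, which is precisely the average-control hypothesis required by Lemma 13.1. Applying the lemma with this $b_N$ and with the comparison sequence $c_N := C\,\|f\|_L\,\|f\|_2\,\|g\|_2^2\,[\log_3 N]^{-1/3}$ (decreasing and tending to zero) produces a set $J_{f,g}\subset\mathbb{N}$ on which $a_n \to 0$ outside $J_{f,g}$. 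Because $a_n = |\langle f\circ T^n, g\rangle|^2$, this gives $\langle f\circ T^n,g\rangle \to 0$ for $n \notin J_{f,g}$; moreover the square-root structure of $a_n$ is what fixes the exponent balance: outside $J_{f,g}$ one has $|\langle f\circ T^n, g\rangle| \leq \sqrt{c_n} = O([\log_3 n]^{-1/6})$, matching the desired density rate. Substituting the explicit forms of $b_N$ and $c_N$ into the density statement of Lemma 13.1 then yields $\frac{1}{N}|J_{f,g}\cap[0,N]| \leq [\log_3 N]^{-1/6}$, as claimed.

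\noindent\textbf{Main obstacle.} The analytic heavy lifting sits in Theorem \ref{speedwm} and its transfer to the interval; the present corollary is the conceptual step of turning a Ces\`aro bound into a pointwise bound off a thin exceptional set. The only real subtlety is that Lemma 13.1 is stated for a monotone decreasing sequence, whereas the correlation sequence $(a_j)$ is only nonnegative and bounded. This is standardly handled either by passing to the nonincreasing envelope $\tilde a_n := \sup_{k\geq n} a_k$ (which inherits an averaged bound of the same order) or, equivalently, by a direct multi-scale Markov / diagonalization argument: one defines $J_{f,g}$ scale by scale as $\{n\in[N_{k-1},N_k): a_n \geq c_{N_k}\}$ for dyadic $N_k$, uses Markov on each scale to bound the density, and combines the pieces. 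Either route reduces the problem to the arithmetic balance $b_N/c_N \sim [\log_3 N]^{-1/6}$ between the two rates, which is the reason for the particular exponents $-1/6$ and $-1/3$ in the statement.
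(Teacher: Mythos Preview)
Your proposal is correct and follows essentially the same approach as the paper: set $a_j = |\langle f\circ T^j, g\rangle|^2$, take $b_N = C\|f\|_L\|f\|_2\|g\|_2^2[\log_3 N]^{-1/6}$ from Theorem~\ref{speedwm} (transferred to the interval), and apply Lemma~13.1 with $c_N = C\|f\|_L\|f\|_2\|g\|_2^2[\log_3 N]^{-1/3}$. You are in fact more careful than the paper in flagging the monotonicity hypothesis on $(a_n)$ in Lemma~13.1, which the paper applies without comment; your suggested workaround via the nonincreasing envelope or a scale-by-scale Markov argument is the standard fix.
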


\newpage

\bibliographystyle{apalike}
\bibliography{references}

\end{document}